\documentclass[10pt]{article}

\usepackage[english]{babel}
\usepackage{amsmath}
\usepackage{amssymb}
\usepackage{bm}
\usepackage{bbm}
\usepackage{mathrsfs,color}
\usepackage{graphics,graphicx,theorem}
\usepackage{dsfont}
\pagestyle{plain}
\usepackage{setspace}

\usepackage{mathtools}

\usepackage{natbib}
\usepackage{multirow}
\usepackage{hhline}
\usepackage{hyperref}

\usepackage{authblk}

\hypersetup{
	colorlinks=true,
	urlcolor=blue,
	citecolor=blue,
	linktoc=all,
	linkcolor=red}

\usepackage[scriptsize]{subfigure}
\allowdisplaybreaks

\makeatletter
\renewcommand\@biblabel[1]{}
\makeatother
\usepackage{amsmath}
\usepackage{bm}
\usepackage{amsfonts,dsfont,mathrsfs}

% 5/04/2021, zn: added this for tables and figures
\usepackage{booktabs}
\usepackage{pgfplots}
\usetikzlibrary{backgrounds,calc,positioning,arrows}

%% mathbb
%\newcommand{\bbC}{\mathbb{C}}
%\newcommand{\indic}{\mathds{1}}
%\newcommand{\F}{\mathbb{F}}
%\newcommand{\X}{\mathbb{X}}
%\newcommand{\Z}{\mathbb{Z}}
%\newcommand{\Sd}{\mathbb{S}}
%\newcommand{\Y}{\mathbb{Y}}
\newcommand{\ub}{\mathbf{u}}

%% mathds

%% mathsf

%%% Script letters
\newcommand{\Bcr}{\mathscr{B}}

% \mathcal 
\newcommand{\Wc}{\mathcal{W}}
\newcommand{\Pc}{\mathcal{P}}

%%% Mathfrak

\def\naturals{\mathbb{N}}
\def\reals{\mathbb{R}}

\newcommand{\probabilityspace}{(\Omega, \mathscr{F}, \mathds{P})}
\newcommand{\ppms}{\mathbb{P}}

\newcommand{\randommeasure}{\tilde{\mathfrak{p}}}

\newcommand{\empiric}{\mathfrak{e}_n}
\newcommand{\pfrak}{\mathfrak{p}}
\newcommand{\ee}{\mathsf{E}}
\newcommand{\pp}{\mathsf{P}}

\def\probabilityspace{(\Omega, \mathscr{F}, \mathsf{P})}
\def\ss{\mathbb{X}}
\def\ssa{\mathscr{X}}
\def\Space{\mathbb{S}}
\def\ps{\mathcal{P}(\ss)}
\def\psa{\mathscr{T}}
\newcommand{\ud}{\mathrm{d}}

\newcommand{\pms}{\mathcal{P}(\ss)}
\newcommand{\pmssa}{\mathscr{P}}
\newcommand{\Wp}{\mathcal{W}_p}
\newcommand{\Wuno}{\mathcal{W}_1}

\newcommand{\ind}{\mathds{1}}

\newtheorem{theorem}{Theorem}[section]
\newtheorem{definition}{Definition}[section]
\newtheorem{proposition}{Proposition}[section]
\newtheorem{corollary}{Corollary}[section]

\newtheorem{lemma}{Lemma}[section]

\newenvironment{proof}{\noindent \textit{Proof.}}{\hfill$\square$}

\usepackage[top=1.2in,bottom=1.2in,left=1.2in,right=1.2in]{geometry}

\onehalfspacing

\begin{document}

\title{\bf {\Large{Wasserstein posterior contraction rates in non-dominated Bayesian nonparametric models}}}

\author[,1]{Federico Camerlenghi \thanks{Also affiliated to Collegio Carlo Alberto, Piazza V. Arbarello 8, Torino, and BIDSA, Bocconi University, Milano, Italy; federico.camerlenghi@unimib.it}}
\author[,2]{Emanuele Dolera \thanks{emanuele.dolera@unipv.it}}
\author[,3]{Stefano Favaro \thanks{Also affiliated to Collegio Carlo Alberto, Piazza V. Arbarello 8, Torino, and IMATI-CNR ``Enrico  Magenes", Milan, Italy; stefano.favaro@unito.it}}
\author[,4]{Edoardo Mainini \thanks{mainini@dime.unige.it}}

\affil[1]{Department of Economics, Management and Statistics, University of Milano-Bicocca, Milano, Italy.}
\affil[2]{Department of Mathematics, University of Pavia, Pavia, Italy.}
\affil[3]{Department of Economics and Statistics, University of Torino, Torino, Italy.}
\affil[4]{Department of Mechanical Engineering, University of Genova, Genova, Italy.}

\date{}
\maketitle
\thispagestyle{empty}

\setcounter{page}{1}

\begin{abstract}
Posterior contractions rates (PCRs) strengthen the notion of Bayesian consistency, quantifying the speed at which the posterior distribution concentrates on arbitrarily small neighborhoods of the true model, with probability tending to $1$ or almost surely, as the sample size goes to infinity. Under the Bayesian nonparametric framework, a common assumption in the study of PCRs is that the model is dominated for the observations; that is, it is assumed that the posterior can be written through the Bayes formula. In this paper, we consider the problem of establishing PCRs in Bayesian nonparametric models where the posterior distribution is not available through the Bayes formula, and hence models that are non-dominated for the observations. By means of the Wasserstein distance and a suitable sieve construction, our main result establishes PCRs in Bayesian nonparametric models where the posterior is available through a more general disintegration than the Bayes formula. To the best of our knowledge, this is the first general approach to provide PCRs in non-dominated Bayesian nonparametric models, and it relies on minimal modeling assumptions and on a suitable continuity assumption for the posterior distribution. Some refinements of our result are presented under additional assumptions on the prior distribution, and applications are given with respect to the Dirichlet process prior and the normalized extended Gamma process prior.
\end{abstract}

\noindent\textsc{Keywords}: {Bayesian consistency; Bayesian nonparametric statistics; Dirichlet process; non-dominated Bayesian model; normalized extended Gamma process;
posterior contraction rate; predictive distribution; Wasserstein distance.} 

\maketitle

%%%%%%%%%%%%%%%%%%%%%%%%%%%%%%%%
%%%%%%%%%%%%%%%%%%%%%%%%%%%%%%%%
%%%%%%%%%%%%%%%%%%%%%%%%%%%%%%%%
%%%%%%%%%%%%%%%%%%%%%%%%%%%%%%%%

\section{Introduction}

Frequentist consistency of Bayesian procedures, or simply Bayesian consistency, guarantees that the posterior distribution concentrates on arbitrarily small neighborhoods of the true model, with probability tending to $1$ or almost surely, as the sample size goes to infinity \citep{Doo(49),Sch(65),Fre(63),Fre(65),Dia(86),Bar(99),Gho(99),W(04)}. See \cite[Chapter 6 and Chapter 7]{GV(00)} and references therein for a comprehensive and up-to-date account on Bayesian consistency. Posterior contractions rates (PCRs) strengthen the notion of Bayesian consistency, quantifying the speed at which such neighborhoods may decrease to zero meanwhile still capturing most of the posterior mass. The problem of establishing PCRs in finite-dimensional (parametric) Bayesian models have been first considered in \cite{IbHa(81)} and \cite{LeCam(86)}, providing optimal PCR under suitable modeling assumptions. However, it is in the seminal works \cite{GGV(00)} and \cite{ShWa(01)} that the problem of establishing PCRs have been considered in a systematic way, setting forth a general approach to provide PCRs in both finite-dimensional (parametric) and infinite-dimensional (nonparametric) Bayesian models. Since then, several approaches have been proposed and investigated in order to obtain more explicit and also sharper PCRs in Bayesian nonparametrics. Among them, we recall the metric entropy approach, in combination with the definition of specific tests \citep{Sch(65),GGV(00)}, approaches based on bracketing numbers and entropy integrals \citep{ShWa(01)}, the martingale approach \citep{W(04),WLP(07)}, the Hausdorff entropy approach \citep{Xing(10)} and approaches based on the Wasserstein distance \citep{deBlasi(20)} and on its ``dynamic" formulations in term of partial differential equations \citep{DFM(20)}. At the ground of most of these approaches there is the explicit construction of a sieve in the space of the parameters or, at least, the existence of a sieve is implied. We refer to \cite[Chapter 8 and Chapter 9]{GV(00)} and references therein for a comprehensive and up-to-date account on PCRs.

In this paper, we consider the problem of establishing PCRs in non-dominated Bayesian nonparametric models. Nonparametric priors, such as the Dirichlet process prior \citep{Fer(73)} and generalizations thereof \citep{Lij(10),GV(00)}, have been extensively investigated in the study of PCRs \citep[Section 8]{GGV(00)}. However, their use has been mainly as hierarchical priors, whereas the underlying model is assumed to be dominated for the observations; that is, it is assumed that the posterior distribution can be written through the Bayes formula. Here, we deal with Bayesian nonparametric models where the posterior distribution is not available through the Bayes formula, and hence models that are non-dominated for the observations. By means of the Wasserstein distance and a suitable sieve construction, our main result establishes PCRs in Bayesian nonparametric models where the posterior is available through a more general disintegration than the Bayes formula. To the best of our knowledge, this is the first general approach to provide PCRs in non-dominated Bayesian nonparametric models, and it relies on minimal modeling assumptions and on a suitable continuity assumption for the posterior distribution \citep{DM(20a),DM(20b),DFM(20)}. Some refinements of our result are presented under additional assumptions on the prior distribution, showing how the continuity assumption on the posterior distribution may be equivalently stated as an assumption on the predictive distributions induced by the prior. We apply our result in a Bayesian nonparametric framework with the Dirichlet process prior \citep{Fer(73)}, which is an example of a conjugate prior, and with the normalized extended Gamma process prior \citep[Example 2]{JLP(09)}, which is an example of a non-conjugate prior in the class of priors obtained by normalizing completely random measures \citep{JLP(09),Lij(10)}.

The paper is structured as follows. In Section \ref{sect:main_problem} we introduce Wasserstein PCRs in Bayesian nonparametrics, and present the key arguments of our approach to PCRs. In Section \ref{sec_mainres} we state and prove our main result on Wasserstein PCRs in non-dominated Bayesian nonparametric models, and then discuss some refinements of it under additional assumptions on the prior distribution. Section \ref{sect:illustrations} contains two examples of the application of our results, whereas in Section \ref{discuss} we discuss our work and also directions for future research. Auxiliary lemmas and proofs of complementary results are deferred to the Appendix.

%%%%%%%%%%%%%%%%%%%%%%%%%%%%%%%%
%%%%%%%%%%%%%%%%%%%%%%%%%%%%%%%%
%%%%%%%%%%%%%%%%%%%%%%%%%%%%%%%%
%%%%%%%%%%%%%%%%%%%%%%%%%%%%%%%%

\section{Wasserstein PCRs in Bayesian nonparametrics}\label{sect:main_problem}

We assume $\ss$-valued observations, with the space $\ss$ being a Polish space equipped with its Borel $\sigma$-field $\ssa$. Moreover, we denote by $\pms$ the space of all probability measures on $(\ss,\ssa)$, and we assume that $\pms$ is equipped with the corresponding Borel $\sigma$-field  $\Bcr(\pms)$ induced by the topology of weak convergence of probability measures \citep{GV(00)}. From a Bayesian perspective, observations are modeled as part of a sequence $X^{(\infty)} := \{X_i\}_{i \geq 1}$ of exchangeable random variables, each $X_{i}$'s taking values in $(\ss, \ssa)$ and defined on a common probability space $\probabilityspace$. By the de Finetti representation theorem, exchangeability of the observations is equivalent to the existence of a random probability measure $\randommeasure$ on $(\ss, \ssa)$ conditionally to which the $X_{i}$'s are independent and identically distributed, that is
\begin{displaymath}
\pp[X_1\in A_1, \dots, X_n\in A_n] = \int_{\pms } \prod_{i=1}^n p (A_i)  \pi (\ud p)
\end{displaymath}
for all $A_1, \ldots , A_n \in \ssa$ and $n \geq 1$. We have also denoted by
$\pi$ the probability distribution of $\randommeasure$, which is called the de Finetti measure of $X^{(\infty)}$. The core of Bayesian inferences is the posterior distribution, which is the conditional distribution of $\randommeasure$ given $(X_1, \dots, X_n)$, i.e. $\pp[\randommeasure \in \cdot|X_1, \dots, X_n]$.  The posterior can be represented by means of a probability kernel $\pi_n(\cdot| \cdot): \Bcr(\pms)\times\ss^n \rightarrow [0,1]$ satisfying the disintegration
\begin{equation} \label{disintegration}
\pp[X_1\in A_1, \dots, X_n\in A_n, \randommeasure\in B] = \int_{A_1\times\dots\times A_n} \pi_n(B|x^{(n)}) \mu_n(\ud x^{(n)})
\end{equation}
for all Borel sets $A_1, \dots, A_n \in \ssa$ and $B \in \Bcr(\pms)$ and $n\geq1$, where we set $x^{(n)} := (x_1, \dots, x_n)$ and
\begin{equation} \label{lawobservations}
\mu_n(A_1\times\dots\times A_n) := \pp (X_1 \in A_1, \ldots , X_n \in A_n), 
\end{equation}
so that $\pp[\randommeasure \in B|X_1, \dots, X_n] = \pi_n(B|X_1, \dots, X_n)$ is valid $\pp$\text{-a.s.} for any $B \in \Bcr(\pms)$. Another useful notion is that of predictive distribution, namely the conditional law 
$\pp[X_{n+1} \in \cdot|X_1, \dots, X_n]$. The predictive distribution can be represented by means of a probability kernel $\alpha_n(\cdot|\cdot) : \ssa \times \ss^n \rightarrow [0,1]$ for which
\begin{equation} \label{predictive}
\pp[X_1 \in A_1, \dots, X_n \in A_n, X_{n+1} \in A_{n+1}] = \int_{A_1 \times \dots \times A_n} \!\!\!\!\!\!\!\alpha_n(A_{n+1}|x^{(n)}) \mu_n(\ud x^{(n)})
\end{equation}
holds for all $A_1, \dots, A_n, A_{n+1} \in \ssa$, so that $\pp[X_{n+1} \in A_{n+1}|X_1, \dots, X_n] = \alpha_n(A_{n+1}|X_1, \dots, X_n)$. The disintegration \eqref{disintegration} and the predictive distribution \eqref{predictive} are critical for the development of our approach to PCRs.

\subsection{Wasserstein PCRs}

From \cite[Definition 6.1]{GV(00)} the posterior distribution is (weakly) consistent at $\pfrak_0\in\pms$ if, as $n\rightarrow+\infty$, the convergence $\pi_n(U_0^c| \xi_1, \dots, \xi_n) \rightarrow 0$ holds in probability for any neighborhood $U_0$ of $\pfrak_0$, where $\xi^{(\infty)} := \{\xi_i\}_{i \geq 1}$ stands for a sequence of $\ss$-valued independent random variables identically distributed as $ \pfrak_0$. The non uniqueness of the posterior distribution $\pi_n$ requires some additional regularity assumptions in order that the random measure $\pi_n(\cdot| \xi_1, \dots, \xi_n)$ is well-defined. The notion of PCR strengthens the notion of Bayesian consistency \cite[Chapter 8]{GV(00)}. In particular, a PCR allows to provide a precise quantification of Bayesian consistency. After the specification of a suitable distance $\ud_{\ps}$ on $\ps$, that yields $\Bcr(\ps)$ as relative Borel $\sigma$-algebra, the definition of PCR can be stated as follows \citep[Definition 8.1]{GV(00)}.

\begin{definition} \label{def:consistency}
A sequence $\{\epsilon_n\}_{n \geq 1}$ of (positive) numbers is defined to be a PCR at $\pfrak_0$ if, as $n\rightarrow+\infty$,
\begin{equation} \label{post_consistency}
\pi_n\left(\left\{\pfrak \in \ps : \ud_{\ps}(\pfrak,\pfrak_0) \geq M_n\epsilon_n\right\} \big| \xi_1, \dots, \xi_n \right) \rightarrow 0
\end{equation}
holds in probability for every choice of a sequence $\{M_n\}_{n \geq 1}$ of (positive) numbers such that $M_n \rightarrow +\infty$. 
\end{definition}
To summarize, a PCR quantifies the speed at which a $\ud_{\ps}$-neighborhood of the (true) parameter $\pfrak_0$ is allowed to shrink while maintaining, nevertheless, a very high posterior probability. We refer to \cite[Chpater 6-9]{GV(00)}, and references therein, for a comprehensive account on Bayesian consistency and PCRs.

Our approach to PCRs originates from a reformulation of Definition \ref{def:consistency} in terms of the $p$-Wasserstein distance \citep{AG,Vill(03)}. Let $(\Space, \ud_{\Space})$ be an (abstract) separable metric space, and denote by $\mathcal P(\Space)$ the relative space of all probability measures on $(\Space, \Bcr(\Space))$. Then, for any $p \geq 1$ the $p$-Wasserstein distance is defined as
\begin{equation} \label{Wasserstein}
\Wp^{(\mathcal P(\Space))}(\gamma_1; \gamma_2) := \inf_{\eta \in \mathcal{F}(\gamma_1,\gamma_2)} \left(\int_{\Space^2}  [\ud_{\Space}(x, y)]^p\ \eta(\ud x\ud y) \right)^{1/p}
\end{equation}
for any $\gamma_1, \gamma_2 \in \mathcal P_p(\Space)$, where 
\begin{displaymath}
\mathcal P_p(\Space):= \left\{\gamma \in \mathcal P(\Space)\ :\ \int_{\Space} [\ud_{\Space}(x, x_0)]^p \gamma(\ud x) < +\infty\, \ \text{for\ some\ } x_0 \in \Space\right\}
\end{displaymath}
and $\mathcal{F}(\gamma_1, \gamma_2)$ is the class of all probability measures on $(\Space^2, \Bcr(\Space^2))$ with $i$-the marginal $\gamma_i$, $i=1,2$. See \cite[Chapter 7]{AGS(08)} and, in particular, \cite[Proposition 7.1.5]{AGS(08)}. According to Definition \ref{def:consistency} and Lemma \ref{lem_pcr} the quantity
\begin{equation} \label{Wpcr}
\epsilon_n = \ee\left[ \Wp^{(\mathbb{P})}(\pi_n(\cdot| \xi_1, \dots, \xi_n); \delta_{\pfrak_0}) \right]
\end{equation}
gives a PCR at $\pfrak_0$, where $\delta_{\pfrak_0}$ denotes the degenerate distribution at $\pfrak_0$ and where we set $\mathbb{P}:= \mathcal P(\ps)$ to be the space of probability measures on $\ps$. We refer to $\epsilon_n $ in \eqref{Wpcr} as a $p$-Wasserstein PCR ($p$-WPCR).

Our main result provides a $p$-WPCR in non-dominated Bayesian nonparametric models. The key arguments of our approach may be summarized as follows. Following ideas developed in \cite{ReSa(00)}, we introduce a parameter $\delta > 0$ and a suitable finite partition $\{A_{j,\delta}\}_{j=1, \dots, N}$ of the metric space $(\ss, \ud_{\ss})$, that we assume to be totally bounded. Then, we show that such a partition induces a sequence of random variables $\{\eta_n\}_{n \geq 1}$, $\eta_n$ being an approximation of $\xi_n$, and a random probability measure $\randommeasure_{\delta}$, which is a discretized version of the directing (de Finetti) measure $\randommeasure$ of the sequence $X^{(\infty)}$ \citep{Ald(85)}. Then, we write
\begin{align} \label{splitPCR_NP}
\epsilon_n &= \ee\left[ \Wp^{(\ppms)}(\pi_n(\cdot| \xi_1, \dots, \xi_n); \delta_{\pfrak_0}) \right] \\
&\leq \ee\left[\Wp^{(\ppms)}(\pi_n(\cdot|\xi_1, \dots, \xi_n); \Gamma_N^{\ast}(\cdot|\eta_1, \dots, \eta_n)) \right] \nonumber\\
&\quad+ \ee\left[ \Wp^{(\ppms)}(\Gamma_N^{\ast}(\cdot|\eta_1, \dots, \eta_n) ;  \Sigma_N^{\ast}(\cdot|\eta_1, \dots, \eta_n) )\right] \nonumber \\
&\quad+ \ee\left[ \Wp^{(\ppms)}( \Sigma_N^{\ast}(\cdot|\eta_1, \dots, \eta_n); \delta_{\empiric^{(\eta)}} )\right] \nonumber \\
&\quad+ \ee\left[ \Wp^{(\pms)}(\empiric^{(\eta)}; \empiric^{(\xi)} ) \right] \nonumber  \\
&\quad+ \ee\left[ \Wp^{(\pms)}( \empiric^{(\xi)}; \pfrak_0) \right], \nonumber 
\end{align}
where $\empiric^{(\eta)} := n^{-1} \sum_{i=1}^n \delta_{\eta_i}$ is the empirical process, whereas $\Gamma_N^{\ast}$ and $\Sigma_N^{\ast}$ are the probability kernels that stand for the posterior distributions of $\randommeasure$ and $\randommeasure_{\delta}$, respectively, evaluated at the discretized (hypothetical) data $\eta_1, \dots, \eta_n$. On the right-most term of \eqref{splitPCR_NP} we observe the occurrence of $\ee[ \Wp^{(\mathcal P(\ss))}(\pfrak_0; \empiric^{(\xi)})]$, which is precisely the rate of convergence of a mean Glivenko-Cantelli theorem \citep{DR(19)}. According to \eqref{splitPCR_NP}, the problem of establishing a $p$-WPCR $\epsilon_n$ reduces to upper bound the five terms of the right-hand side of \eqref{splitPCR_NP}.

%%%%%%%%%%%%%%%%%%%%%%%%%%%%%%%%
%%%%%%%%%%%%%%%%%%%%%%%%%%%%%%%%
%%%%%%%%%%%%%%%%%%%%%%%%%%%%%%%%
%%%%%%%%%%%%%%%%%%%%%%%%%%%%%%%%

\section{Main results}\label{sec_mainres}

Before stating our main result, it is useful to give a schematic representation of the metric spaces, as wells as of their interplay, that appear within our Bayesian nonparametric framework under the $p$-Wasserstein distance. Links between these metric spaces are not established solely by the definition of the fundamental objects of the theory, such as the statistical model and the corresponding posterior distribution, but also by the $p$-Wasserstein distance, whose definition is strongly influenced by the base metric. In Figure \ref{figure1}, solid arrows point out the presence of a specific mapping. Furthermore, dotted arrows indicate that the Wasserstein distance corresponding to the head of the arrow is built on the metric space corresponding to the nock of the same arrow. Dashed arrows denote some metric construction, like the product or the quotient. In particular, the space $\ss^n/\!\!\sim$ stands for the quotient of the product space $\ss^n$ by the action of the symmetric group $\mathfrak S_n$. More precisely, upon specifying that we write
$$
\ud^n_{\ss}\big( (x_1, \dots, x_n); (y_1, \dots, y_n) \big) := \left(\frac 1n \sum_{i=1}^n [\ud_{\ss}(x_i, y_i)]^p \right)^{1/p}\ ,
$$
the quotient metric $\tilde{\ud_{\ss}^n}$ is
\begin{align*}
\tilde{\ud_{\ss}^n}([x], [y]) &:= \inf_{\substack{(x_1, \dots, x_n) \in [x] \\ (y_1, \dots, y_n) \in [y]}} \ud^n_{\ss}\big( (x_1, \dots, x_n); (y_1, \dots, y_n) \big) \nonumber \\
&= \inf_{\tau \in \mathfrak S_n}\ud^n_{\ss}\big( (x_1, \dots, x_n); (y_{\tau_n(1)}, \dots, y_{\tau_n(n)}) \big) 
\end{align*}
for all $[x], [y] \in \ss^n/\!\!\sim$. Now, a well-known theorem by Birkhoff \citep[Theorem 6.0.1]{AGS(08)} entails that $\tilde{\ud_{\ss}^n}([x], [y]) = \Wp^{(\pms)}(\empiric^{(x)}; \empiric^{(y)})$. Such a particular construction is critical in the proof of our main result.

\tikzstyle{block} = [ fill=white, 
    minimum height=3em, minimum width=6em]
\tikzstyle{line} = [-stealth, thick, draw]

\begin{figure}[!htb]
\begin{center}
\begin{tikzpicture}[node distance = 4.1cm, auto]
    %nodi
    \node [block] (fix) {$(\mathcal P_p(\ss), \Wp^{(\pms)})$};
    \node [block, above of=fix] (changed2) {$(\ss, \ud_{\ss})$};
   % \node [block, left of=fix] (changed3) {$(\ps, \ud_{\ps})$};
    \node [block, below of=fix] (changed6) {$(\mathcal P_p(\ps), \Wp^{(\mathcal P(\ps)})$};
    \node [block, right of=changed2] (changed4) {$(\ss^n, \ud_{\ss}^n)$};
    \node [block, right of=fix] (changed5) {$(\ss^n/\!\!\sim, \tilde{\ud_{\ss}^n})$};
    %linee
   % \path [line] (changed3) -- node {identity} (fix);
    \path [line,dotted] (changed2) -- (fix);
    \path [line] (fix) -- node {$\gamma \mapsto \pi_n^{\ast}(\cdot|\gamma)$} (changed6);
    \path [line] (changed4) -- node {$\mathfrak{e}_n^{(x)}$} (fix);
    \path [line,dashed] (changed2) -- node {product} (changed4);
   % \path [line,dotted] (changed3) -- (changed6);
    \path [line,dashed] (changed4) --  node {quotient}  (changed5);
    \path [line] (changed5) -- node {$x^{(n)} \mapsto \pi_n(\cdot|x^{(n)})$} (changed6);
\end{tikzpicture}
 \caption{The metrics spaces involved in the Bayesian nonparametric framework under the $p$-Wasserstein distance.}
 \end{center}
\label{figure1}
\end{figure}
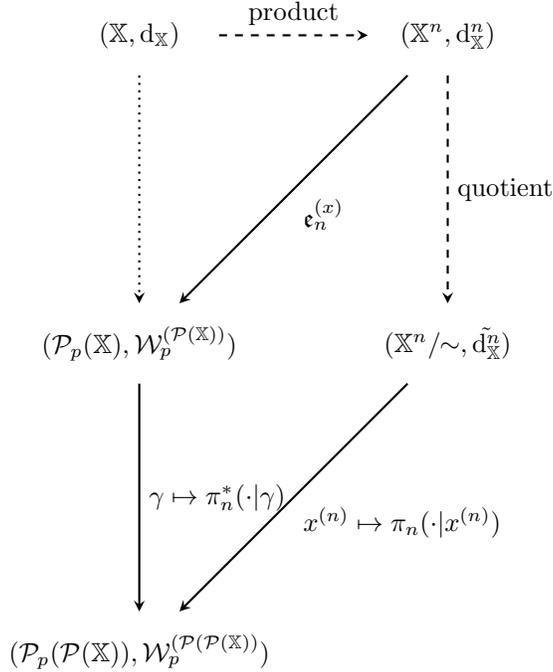

Now, we can state our main result on $p$-WPCR in non-dominated Bayesian nonparametric models. For a fixed $\delta > 0$, we consider the $\delta$-covering number $N_{\delta}(\ss, \ud_{\ss})$ of the metric space $(\ss, \ud_{\ss})$, which is supposed to be a totally bounded metric space.  Given such a  $\delta$-covering of $(\ss, \ud_{\ss})$, our starting point consists in finding a solution of the disintegration \eqref{disintegration}, which is denoted by $\pi_n^{\ast}(\cdot|\cdot)$, that satisfies the condition
\begin{equation} \label{post_continuity}
\Wp^{(\ppms)}(\pi_n^{\ast}(\cdot|x^{(n)});  \pi_n^{\ast}(\cdot|y^{(n)})) \leq L_n\ \Wp^{(\pms)}(\empiric^{(x)}; \empiric^{(y)})
\end{equation}
for all $x^{(n)} := (x_1, \dots,x_n)$ and $y^{(n)} := (y_1, \dots,y_n)$ in $\ss^n$, with some positive constant $L_n$.  Note that \eqref{post_continuity} may be regarded as an assumption of continuity for the posterior distribution in the sense of \cite{DFM(20)}. See \cite{DM(20a),DM(20b),DFM(20)} for details on continuity conditions for posterior distributions and their use in Bayesian consistency.

\begin{theorem} \label{main_thm2}
Assume that $(\ss, \ud_{\ss})$ is a totally bounded metric space, and that there exists a distinguished solution of the disintegration \eqref{disintegration} that fulfills \eqref{post_continuity} for all $x^{(n)}, y^{(n)} \in \ss^n$. 
Then, for any infinitesimal sequence $\delta_n$ of (positive) numbers, we can find two other infinitesimal sequences $M_n(N_{\delta_n}, \delta_n)$ and $V_n(N_{\delta_n}, \delta_n)$, depending only on the nonparametric 
prior through its finite-dimensional laws, such that
\begin{equation} \label{main_bound2}
\epsilon_n = \varepsilon_{n,p}(\ss, \pfrak_0) + 2(2 + L_n)\delta_n + \text{diam}(\ss) \left\{\frac 12 \left[M_n(N_{\delta_n}, \delta_n) + V_n(N_{\delta_n}, \delta_n)\right] \right\}^{1/p},
\end{equation}
with $ \varepsilon_{n,p}(\ss, \pfrak_0)$ being the rate of convergence of the mean Glivenko-Cantelli theorem, gives a $p$-WPCR at $\pfrak_0$. 
\end{theorem}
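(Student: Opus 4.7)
My strategy is to exploit the five--term triangle decomposition \eqref{splitPCR_NP} and bound each summand separately. First I would make the discretisation precise: for a given infinitesimal sequence $\delta_n$, total boundedness of $(\ss,\ud_{\ss})$ yields a $\delta_n$-covering $\{x_1, \dots, x_{N_{\delta_n}}\}$, from which one builds a measurable partition $\{A_{j,\delta_n}\}_{j=1}^{N_{\delta_n}}$ with $x_j \in A_{j,\delta_n}$ and $\text{diam}(A_{j,\delta_n}) \le 2\delta_n$. Set $\eta_i := x_j$ whenever $\xi_i \in A_{j,\delta_n}$ and introduce the discretised de Finetti random measure
\[
\randommeasure_{\delta_n} := \sum_{j=1}^{N_{\delta_n}} \randommeasure(A_{j,\delta_n})\, \delta_{x_j},
\]
with associated Bayesian posterior kernels $\Gamma_N^{\ast}(\cdot|\eta^{(n)})$ (which coincides with $\pi_n^{\ast}(\cdot|\eta^{(n)})$) for $\randommeasure$ and $\Sigma_N^{\ast}(\cdot|\eta^{(n)})$ for $\randommeasure_{\delta_n}$, both evaluated at the discretised sample.

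The first, fourth and fifth summands of \eqref{splitPCR_NP} are the cheap ones. Term~(V) is the mean Glivenko--Cantelli rate $\varepsilon_{n,p}(\ss,\pfrakz)$ by definition. For term~(IV), the diagonal coupling between $\empiric^{(\xi)}$ and $\empiric^{(\eta)}$ costs at most $\ud_{\ss}(\xi_i, \eta_i) \le 2\delta_n$ per sample, hence $\Wp^{(\pms)}(\empiric^{(\eta)}; \empiric^{(\xi)}) \le 2\delta_n$; term~(I) is upgraded to $2 L_n \delta_n$ by directly applying the continuity hypothesis \eqref{post_continuity} to $\pi_n^{\ast}$. For term~(II) I would use the pushforward coupling $\randommeasure \mapsto \randommeasure_{\delta_n}$, admissible for $(\Gamma_N^{\ast}, \Sigma_N^{\ast})$: every unit of mass travels at most $\text{diam}(A_{j,\delta_n}) \le 2\delta_n$ within its cell, so $\Wp^{(\pms)}(\pfrak; \pfrak_{\delta_n}) \le 2\delta_n$ sample-wise and therefore term~(II) $\le 2\delta_n$. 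Collecting these four contributions produces $\varepsilon_{n,p}(\ss, \pfrakz) + 2(2+L_n)\delta_n$.

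The main obstacle is term~(III), where the analysis becomes genuinely finite-dimensional. Every measure in the support of $\Sigma_N^{\ast}(\cdot|\eta^{(n)})$ shares the support $\{x_1, \dots, x_{N_{\delta_n}}\}$ with $\empiric^{(\eta)}$, so the standard total-variation majorisation of the $p$-Wasserstein distance on a finite metric space yields
\[
\Wp^{(\pms)}(\pfrak; \empiric^{(\eta)})^p \le \text{diam}(\ss)^p \cdot \frac{1}{2} \sum_{j=1}^{N_{\delta_n}} \big|\pfrak(\{x_j\}) - n_j/n\big|,
\]
where $n_j := \#\{i \le n : \eta_i = x_j\}$. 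Integrating against $\Sigma_N^{\ast}(\cdot|\eta^{(n)})$, applying Jensen's inequality to the outer expectation (valid because $(\cdot)^{1/p}$ is concave for $p \ge 1$) and using the tower property gives
\[
\text{Term (III)} \le \text{diam}(\ss) \cdot \left(\frac{1}{2} \sum_{j=1}^{N_{\delta_n}} \ee\big[|\randommeasure(A_{j,\delta_n}) - n_j/n|\big]\right)^{1/p}.
\]
The right-hand side depends on the nonparametric prior only through the joint law of the vector $(\randommeasure(A_{1,\delta_n}), \dots, \randommeasure(A_{N_{\delta_n},\delta_n}))$, i.e.\ through its finite-dimensional marginals. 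A triangle split of $|\randommeasure(A_{j,\delta_n}) - n_j/n|$ around the posterior mean $\bar w_j(\eta^{(n)}) := \ee[\randommeasure(A_{j,\delta_n})|\eta^{(n)}]$ separates the sum into a \emph{bias} contribution $M_n(N_{\delta_n}, \delta_n)$ and a centred/\emph{variance} contribution $V_n(N_{\delta_n}, \delta_n)$, each expressible as a functional of the finite-dimensional prior laws alone, producing the announced $\text{diam}(\ss)\{\frac{1}{2}[M_n + V_n]\}^{1/p}$ term.

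Collecting the five bounds yields exactly \eqref{main_bound2}, after which Lemma~\ref{lem_pcr} (invoked via \eqref{Wpcr}) certifies the resulting infinitesimal sequence as a genuine $p$-WPCR at $\pfrakz$. The technical heart of the argument is the finite-dimensional step in term~(III): extracting clean quantities that vanish under only minimal assumptions on the prior and identifying them unambiguously with the sequences $M_n$ and $V_n$ appearing in the statement; the remaining terms are nearly automatic consequences of the discretisation and the continuity hypothesis \eqref{post_continuity}.
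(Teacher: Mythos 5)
Your proposal follows the paper's own five-term triangle decomposition \eqref{splitPCR_NP} and your treatment of terms II--V (push-forward coupling for term II, total-variation majorisation of $\mathcal W_p^p$ plus a triangle split around the posterior mean for term III, diagonal coupling for term IV, Glivenko--Cantelli for term V) matches the paper's proof in essentially every detail, including the Jensen step and the identification of $M_n,V_n$ as posterior mean-absolute-deviation and posterior standard-deviation sums.

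There is, however, a genuine gap in your treatment of term I. You assert that $\Gamma_N^{\ast}(\cdot|\eta^{(n)})$ ``coincides with $\pi_n^{\ast}(\cdot|\eta^{(n)})$'' and accordingly bound term I by a \emph{single} direct application of the continuity hypothesis \eqref{post_continuity}. This identification is false in general: $\pi_n^{\ast}(\cdot|\eta^{(n)})$ is the posterior given the exact observation $X^{(n)}=\eta^{(n)}$, while $\Gamma_N^{\ast}(\cdot|\eta^{(n)})$ is, by construction, the posterior given only the coarser information $Y^{(n)}=\eta^{(n)}$, i.e.\ given which cells $A_{j,\delta}$ the observations fall into. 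These two conditional laws differ unless $x^{(n)}\mapsto\pi_n(\cdot|x^{(n)})$ is constant on the cylinder $C^{(n)}[\eta^{(n)}]$, which is not assumed. (This distinction is also what makes $\Sigma_N^{\ast}$ the $I_\delta$-pushforward of $\Gamma_N^{\ast}$ in term II, and what makes $\Sigma_N^{\ast}$ depend only on the finite-dimensional prior laws in term III; with your identification the pushforward coupling in term II is no longer admissible and the finite-dimensionality claim in term III becomes unjustified.) The correct argument, as in the paper, writes $\Gamma_N^{\ast}(\cdot|\eta^{(n)})$ via de Finetti as the $\mu_n$-normalised average of $\pi_n(\cdot|x^{(n)})$ over $x^{(n)}\in C^{(n)}[\eta^{(n)}]$; for each such $x^{(n)}$ the cell counts agree with those of $\xi^{(n)}$, so Birkhoff's theorem gives $\mathcal W_p^{(\pms)}(\mathfrak e_n^{(x)};\mathfrak e_n^{(\xi)})\le 2\delta$, and then \eqref{post_continuity} plus convexity of $\mathcal W_p^p$ passes the pointwise bound $2L_n\delta$ through the mixture. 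The numerical contribution $2L_n\delta_n$ is unchanged, so \eqref{main_bound2} still emerges, but the missing mixture/Birkhoff/convexity step is essential for the argument to be valid. A minor further omission: your partition should also satisfy $\mu_1(\partial A_{j,\delta})=0$, which is what guarantees (via the mapping theorem) that $\{Y_i\}$ is exchangeable with directing measure $\randommeasure\circ T_\delta^{-1}$ and thus that $\Gamma_N^{\ast}$ and $\Sigma_N^{\ast}$ admit the explicit Bayes-type representations you rely on.
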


The definition of the functions $M_n$  and $V_n$ is given along the proof of Theorem \ref{main_thm2} in Section \ref{proof:thm2}. See Equation \eqref{MV} and the discussion thereafter. More precisely, the functions $M_n$  and $V_n$ are obtained from posterior means and posterior variances, respectively, of Bernoulli models with prior distributions of the form $\pp[\randommeasure(A) \in \cdot]$ and data coinciding with i.i.d. Bernoulli variables with common parameter $\kappa_0(A)$, for some subsets $A \in \ssa$. See also Section \ref{rmkk} for a discussion on the problem of upper bounding $M_{n}+V_{n}$. 

\subsection{Proof of Theorem \ref{main_thm2}}\label{proof:thm2}

For a fixed $\delta > 0$, put $N := N_{\delta}(\ss, \ud_{\ss})$ and indicate by $\{A_{j,\delta}\}_{j=1, \dots, N}$ a partition of $\ss$ such that $\text{diam}(A_{j,\delta}) \leq 2\delta$ and $\mu_1(\partial A_{j,\delta}) = 0$ 
for all $j \in \{1, \dots, N\}$, where $\mu_1$ is defined in \eqref{lawobservations}. Then, define $l_{\delta} : \ss \rightarrow \{1, \dots, N\}$ by the rule that, for any $x \in \ss$, $l_{\delta}(x) = j$ iff $x \in A_{j,\delta}$. 
Choose a point $a_{j,\delta} \in A_{j,\delta}$ for any $j \in \{1, \dots, N\}$, and put $\ss_{\delta} := \{a_{j,\delta}\}_{j=1, \dots, N}$ and $T_{\delta}(x) := a_{l_{\delta}(x),\delta}$, for any $x \in \ss$. 
Consider the new sequences $\{Y_i\}_{i \geq 1}$ and $\{\eta_i\}_{i \geq 1}$ of $\ss_{\delta}$-valued random variables, given by $Y_i := T_{\delta}(X_i)$ and $\eta_i := T_{\delta}(\xi_i)$ for any 
$i \in \naturals$, respectively. $\{Y_i\}_{i \geq 1}$ is a sequence of exchangeable random variables, with directing measure
$$
\randommeasure \circ T_{\delta}^{-1} = \sum_{j=1}^N \randommeasure(A_{j,\delta}) \delta_{a_{j,\delta}}\ .
$$
This follows from the mapping theorem for weak convergence, recalling that $\mu_1(\partial A_{j,\delta}) = 0$ for all $j \in \{1, \dots, N\}$.
Now, for any $n \in \naturals$, for any $y_1, \dots, y_n \in \ss_{\delta}$ and  for any $j \in \{1, \dots, N\}$, introduce the notation
$$
\nu_{\delta}(j; y_1, \dots, y_n) := \sum_{i=1}^n \ind\{y_i = a_{j,\delta}\},
$$
by which
$$
\pp[\randommeasure \in B\ |\ Y^{(n)}] = \frac{\int_B \prod_{j=1}^N [\pfrak(A_{j,\delta})]^{\nu_{\delta}(j; Y^{(n)})} \pi(\ud \pfrak)}{\int_{\pms} 
\prod_{j=1}^N [\pfrak(A_{j,\delta})]^{\nu_{\delta}(j; Y^{(n)})} \pi(\ud \pfrak)} \quad\quad \pp\text{-a.s.}
$$
holds for all $B \in \pmssa := \Bcr({\pms})$, where $Y^{(n)} := (Y_1, \dots, Y_n)$. This suggests to introduce the probability kernel
$$
\Gamma_N(B|\nu_1, \dots, \nu_N) := \frac{\int_B \prod_{j=1}^N [\pfrak(A_{j,\delta})]^{\nu_j} \pi(\ud \pfrak)}{\int_{\pms} \prod_{j=1}^N [\pfrak(A_{j,\delta})]^{\nu_j} \pi(\ud \pfrak)}
$$
for $(B; \nu_1, \dots, \nu_N) \in \pmssa \times \naturals_0^N$, where $\naturals_0 := \{0\} \cup \naturals$, and define the corresponding random probability measure
$$
\Gamma_N^{\ast}(B|\eta_1, \dots, \eta_n) :=  \Gamma_N(B|\nu_{\delta}(1; \eta^{(n)}), \dots, \nu_{\delta}(N; \eta^{(n)}))
$$
for $B \in \pmssa$, where $\eta^{(n)} := (\eta_1, \dots, \eta_n)$. Consider the finite-dimensional distribution of $\pi$ relative to $\{A_{j,\delta}\}_{j=1, \dots, N}$, i.e. $\pi_N(D; A_{1,\delta} \dots A_{N,\delta}) := \pp\left[\left( \randommeasure(A_{1,\delta}), \dots, \randommeasure(A_{N-1,\delta}) \right) \in D \right]$ for any $D \in \Bcr(\Delta_{N-1})$, and set
\begin{align*}
&\Upsilon_N(D\ |\ \nu_1, \dots, \nu_N):= \frac{ \int_D \left[ \prod_{j=1}^{N-1} u_j^{\nu_j} \right] \left(1 - \sum_{j=1}^{N-1} u_j \right)^{\nu_N} \pi_N(\ud\ub; A_{1,\delta} \dots A_{N,\delta})}
{\int_{\Delta_{N-1}} \left[ \prod_{j=1}^{N-1} u_j^{\nu_j} \right] \left(1 - \sum_{j=1}^{N-1} u_j \right)^{\nu_N} \pi_N(\ud\ub; A_{1,\delta} \dots A_{N,\delta})}
\end{align*}
for any $(D; \nu_1, \dots, \nu_N) \in \mathscr{B}(\Delta_{N-1}) \times \naturals_0^N$. Denoting by $G_{\delta} : \Delta_{N-1} \rightarrow \mathcal{P}(\ss_{\delta})$ the one-to-one mapping which sends $\ub \in \Delta_{N-1}$ into the probability measure $\sum_{j=1}^N u_j \delta_{a_{j,\delta}}$, where $u_N := 1 - \sum_{j=1}^{N-1} u_j$, observe that
\begin{align*}
&\pp[\randommeasure \circ T_{\delta}^{-1} \in F|Y^{(n)}]= \Upsilon_N(G_{\delta}^{-1}(F)|\nu_{\delta}(1; Y^{(n)}), \dots, \nu_{\delta}(N; Y^{(n)}))  \quad\quad \pp\text{-a.s.}
\end{align*}
holds for any $F \in \Bcr(\mathcal{P}(\ss_{\delta}))$. Now, if $\iota_{\delta} : \ss_{\delta} \rightarrow \ss$ indicates the (canonical) inclusion map, then we observe that $I_{\delta} : \pfrak \mapsto \pfrak \circ \iota_{\delta}^{-1}$ induces the inclusion of the space $\mathcal{P}(\ss_{\delta})$ into the space $\pms$. Lastly, we set
$$
\Sigma_N(B|\nu_1, \dots, \nu_N) := \Upsilon_N(G_{\delta}^{-1}(I_{\delta}^{-1}(B))|\nu_1, \dots, \nu_N) 
$$
for any $(B; \nu_1, \dots, \nu_N) \in \pmssa \times \naturals_0^N$ and 
$$
\Sigma_N^{\ast}(B|\eta^{(n)}) :=  \Sigma_N(B|\nu_{\delta}(1; \eta^{(n)}), \dots, \nu_{\delta}(N; \eta^{(n)}))\ . 
$$
Upon noticing that 
$$
\Wp^{(\ppms)}(\delta_{\empiric^{(\eta)}}; \delta_{\empiric^{(\xi)}}) = \Wp^{(\pms)}(\empiric^{(\eta)}; \empiric^{(\xi)}) 
$$
and
$$
\Wp^{(\ppms)}(\delta_{\empiric^{(\xi)}}; \delta_{\pfrak_0}) = \Wp^{(\pms)}( \empiric^{(\xi)}; \pfrak_0) \ , 
$$
there are now all the elements to deal with the inequality displayed in \eqref{splitPCR_NP}. In the remaining part of the proof se show how to manipulate each of the five terms on the right-hand side of \eqref{splitPCR_NP} in oder to get the $p$-WPCR \eqref{main_bound2}.

With regards to the first term $\ee[\Wp^{(\ppms)}(\pi_n(\cdot|\xi_1, \dots, \xi_n); \Gamma_N^{\ast}(\cdot|\eta_1, \dots, \eta_n))] $ on the right-hand side of \eqref{splitPCR_NP}, note that
$$
\pp[\randommeasure \in B\ |\ Y^{(n)}] = \frac{\int_{C^{(n)}[Y^{(n)}]} \pi_n(B| x^{(n)}) \mu_n(\ud x^{(n)})}{\mu_n(C^{(n)}[Y^{(n)}])}  \quad\quad \pp\text{-a.s.}
$$
is a valid probability measure by de Finetti's representation theorem, where $C^{(n)}[Y^{(n)}]$ is defined as follows
$$
C^{(n)}[Y^{(n)}] = \times_{j=1}^N  A_{j,\delta}^{\nu_{\delta}(j; Y^{(n)})} := \underbrace{A_{1,\delta} \times \dots \times A_{1,\delta}}_{\nu_{\delta}(1; Y^{(n)})\text{-times}} \times \dots \times
\underbrace{A_{N,\delta} \times \dots \times A_{N,\delta}}_{\nu_{\delta}(N; Y^{(n)})\text{-times}}\ . 
$$
Whence, 
$$
\Gamma_N^{\ast}(B|\eta_1, \dots, \eta_n) 
= \frac{\int_{C^{(n)}[\eta^{(n)}]} \pi_n(B|x^{(n)}) \mu_n(\ud x^{(n)})}{\mu_n(C^{(n)}[\eta^{(n)}])} \quad\quad \pp\text{-a.s.}
$$
and, by convexity of $\Wp^p$ \citep[Chapter 7]{Vill(03)}, 
\begin{align*}
& \left[\Wp^{(\ppms)}(\pi_n(\cdot|\xi_1, \dots, \xi_n); \Gamma_N^{\ast}(\cdot|\eta_1, \dots, \eta_n))\right]^p \\
& \quad\leq \frac{\int_{C^{(n)}[\eta^{(n)}]} \left[\Wp^{(\ppms)}\left(\pi_n(\cdot|x^{(n)}); \pi_n(\cdot|\xi^{(n)})\right)\right]^p \mu_n(\ud x^{(n)})}{\mu_n(C^{(n)}[\eta^{(n)}])}\ .
\end{align*}
At this stage, observe that an application of the (continuity) condition displayed in \eqref{post_continuity} yields the inequality
$$
\Wp^{(\ppms)}(\pi_n(\cdot|x^{(n)}); \pi_n(\cdot\ |\ \xi^{(n)})) \leq L_n\ \Wp^{(\pms)}(\empiric^{(x)}; \empiric^{(\xi)}) \quad\quad \pp\text{-a.s.} 
$$
for any $x^{(n)} \in C^{(n)}[\eta^{(n)}]$. In particular, the definition of the relation $x^{(n)} \in C^{(n)}[\eta^{(n)}]$ entails $\sum_{i=1}^n \ind\{x_i \in A_{j,\delta}\} = \sum_{i=1}^n \ind\{\xi_i \in A_{j,\delta}\}$ for all $j \in \{1, \dots, N\}$. Therefore, a direct application of the Birkhoff theorem \citep[Theorem 6.0.1]{AGS(08)} shows that $\Wp^{(\pms)}(\empiric^{(x)}; \empiric^{(\xi)}) \leq 2\delta$, $\pp$\text{-a.s.}.
Whence, we write
\begin{align*}
\left[\Wp^{(\ppms)}(\pi_n(\cdot|\xi_1, \dots, \xi_n); \Gamma_N^{\ast}(\cdot|\eta_1, \dots, \eta_n))\right]^p &\leq \frac{\int_{C^{(n)}[\eta^{(n)}]} (2L_n\delta)^p \mu_n(\ud x^{(n)})}{\mu_n(C^{(n)}[\eta^{(n)}])} \\
&= (2L_n\delta)^p \qquad \pp\text{-a.s.}.
\end{align*}

With regards to the second term $\ee[ \Wp^{(\ppms)}(\Gamma_N^{\ast}(\cdot|\eta_1, \dots, \eta_n) ;  \Sigma_N^{\ast}(\cdot|\eta_1, \dots, \eta_n) )]$ on the right-hand side of \eqref{splitPCR_NP}, we start by observing that for any bounded and measurable function $h : \pms \rightarrow \reals$ we can write that
$$
\int_{\pms} h(\pfrak) \Sigma_N(\ud\pfrak\ |\ \nu_1, \dots, \nu_N) = \int_{\pms} h(\pfrak \circ T_{\delta}^{-1}) \Gamma_N(\ud\pfrak\ |\ \nu_1, \dots, \nu_N),
$$ 
which holds for all $(\nu_1, \dots, \nu_N) \in \naturals_0^N$. Accordingly, there exists a distinguished coupling between the probability kernel $\Sigma_N(\cdot\ |\ \nu_1, \dots, \nu_N)$ and the probability kernel $\Gamma_N(\cdot\ |\ \nu_1, \dots, \nu_N)$ that yields the inequality
\begin{align*}
&\Wp^{(\ppms)}(\Sigma_N(\cdot\ |\ \nu_1, \dots, \nu_N);  \Gamma_N(\cdot\ |\ \nu_1, \dots, \nu_N)) \\
&\quad\leq \left(\int_{\pms} \left[\Wp^{(\pms)}(\pfrak; \pfrak \circ T_{\delta}^{-1})\right]^p \Gamma_N(\ud\pfrak\ |\ \nu_1, \dots, \nu_N)\right)^{1/p}
\end{align*}
for all $(\nu_1, \dots, \nu_N) \in \naturals_0^N$. At this stage, again by choosing a distinguished coupling, the following inequality holds 
$$
\left[\Wp(\pfrak; \pfrak \circ T_{\delta}^{-1})\right]^p \leq \int_{\ss} [\ud_{\ss}(x, T_{\delta}(x))]^p \pfrak(\ud x)\ .
$$
Moreover, by a direct application the law of total probability, it is easy to show that the above right-hand side is equal to 
$$
\sum_{j=1}^N \pfrak(A_{j,\delta}) \int_{\ss} [\ud_{\ss}(x, T_{\delta}(x))]^p \pfrak(\ud x|A_{j,\delta})\ , 
$$
where $\pfrak(\cdot|A_{j,\delta})$ is the probability measure $A \mapsto \pfrak(A \cap A_{j,\delta})/\pfrak(A_{j,\delta})$ in the case that $\pfrak(A_{j,\delta}) > 0$ and any other probability measure (e.g., $\pfrak_0(\cdot)$) in the case that $\pfrak(A_{j,\delta}) = 0$. As far as those $A_{j,\delta}$'s for which $\pfrak(A_{j,\delta}) > 0$, we get
$$
\int_{\ss} [\ud_{\ss}(x, T_{\delta}(x))]^p \pfrak(\ud x|A_{j,\delta}) \leq (2\delta)^p
$$ 
for all $j \in \{1, \dots, N\}$, because $\pfrak(\cdot|A_{j,\delta})$ is supported in $A_{j,\delta}$. Therefore, we conclude that the second term \\ $[ \Wp^{(\ppms)}(\Gamma_N^{\ast}(\cdot|\eta_1, \dots, \eta_n) ;  \Sigma_N^{\ast}(\cdot|\eta_1, \dots, \eta_n) )]$ on the right-hand side of \eqref{splitPCR_NP} is globally bounded by $2\delta$. 

With regards to the third term $\ee[ \Wp^{(\ppms)}( \Sigma_N^{\ast}(\cdot|\eta_1, \dots, \eta_n); \delta_{\empiric^{(\eta)}} )]$ on the right-hand side of \eqref{splitPCR_NP}, because of the definition of the probability kernel $\Sigma_N(\cdot|\nu_1, \dots, \nu_N)$, we can write the argument of the expectation as \begin{align*}
&\Wp^{(\ppms)}\left(\delta_{\empiric^{(\eta)}}; \Sigma_N(\cdot\ |\ \nu_{\delta}(1; \eta^{(n)}), \dots, \nu_{\delta}(N; \eta^{(n)})) \right) \\
&\quad= \left( \int_{\pms} \left[ \Wp^{(\pms)}(\empiric^{(\eta)}; \pfrak) \right]^p \Sigma_N\big( \ud\pfrak\ |\ \nu_{\delta}(1; \eta^{(n)}), \dots, \nu_{\delta}(N; \eta^{(n)}) \big) \right)^{1/p}\\
&\quad= \left( \int_{\Delta_{N-1}} \left[ \Wp^{(\pms)}\Big( \empiric^{(\eta)}; \sum_{j=1}^N u_j \delta_{a_{j,\delta}} \Big)\right]^p \Upsilon_N\big( \ud\ub\ |\ \nu_{\delta}(1; \eta^{(n)}), \dots, \nu_{\delta}(N; \eta^{(n)}) \big)\right)^{1/p}\ .
\end{align*}
Now, observe that 
$$
\empiric^{(\eta)} = \frac 1n \sum_{i=1}^n \sum_{j=1}^N \ind\{\eta_i = a_{j,\delta}\} \delta_{a_{j,\delta}} = \sum_{j=1}^N \phi_{j,n}^{(\xi)} \delta_{a_{j,\delta}} \ , 
$$
where $\phi_{j,n}^{(\xi)} := \frac 1n \sum_{i=1}^n \ind\{\xi_i \in A_{j,\delta}\}$. Therefore, upon noticing that $\empiric^{(\eta)}$ and $\sum_{j=1}^N u_j \delta_{a_{j,\delta}}$ are both supported on $\ss_{\delta}$, the fact the total variation distance is given by an optimal coupling \citep[page 424]{GiSu(07)} yields
$$
\left[\Wp^{(\pms)}\Big( \empiric^{(\eta)}; \sum_{j=1}^N u_j \delta_{a_{j,\delta}} \Big)\right]^p \leq \frac 12 [\text{diam}(\ss)]^p \sum_{j=1}^N \big| \phi_{j,n}^{(\xi)} - u_j \big| \ . 
$$
Therefore, $\pp$\text{-a.s.}, it holds
\begin{align*}
& \Wp^{(\ppms)} \left(\delta_{\empiric^{(\eta)}}; \Sigma_N(\cdot\ |\ \nu_{\delta}(1; \eta^{(n)}), \dots, \nu_{\delta}(N; \eta^{(n)})) \right) \\
&\quad \leq \text{diam}(\ss) \left( \frac 12 \int_{\Delta_{N-1}} \!\!\! \left[ \sum_{j=1}^N \big| \phi_{j,n}^{(\xi)} - u_j \big|\right] \Upsilon_N\big( \ud\ub|\nu_{\delta}(1; \eta^{(n)}), \dots, \nu_{\delta}(N; \eta^{(n)}) \big)\right)^{1/p}.
%& \leq \text{diam}(\ss) \left( \frac{N}{2} \int_{\Delta_{N-1}} \!\!\! \left[\sup_{j=1, \dots, N} \big| \phi_{j,n}^{(\xi)} - u_j \big|\right] \Upsilon_N\big( \ud\ub | \nu_{\delta}(1; \eta^{(n)}), \dots, \nu_{\delta}(N; \eta^{(n)}) \big) \right)^{1/p} \ .
\end{align*}
In particular, we observe that the previous upper bound suggests that is worth studying the following integral
\begin{align*}
& \int_0^1 |\phi_{j,n}^{(\xi)} - u_j | \Upsilon_{N,j}\big( \ud u_j|\nu_{\delta}(1; \eta^{(n)}), \dots, \nu_{\delta}(N; \eta^{(n)}):= \frac{\int_0^1 |\phi_{j,n}^{(\xi)} - t| t^{n \phi_{j,n}^{(\xi)}} (1-t)^{n(1-\phi_{j,n}^{(\xi)})} \pi_{N,j}(\ud t)}{\int_0^1 t^{n \phi_{j,n}^{(\xi)}} (1-t)^{n(1-\phi_{j,n}^{(\xi)})}  \pi_{N,j}(\ud t)}
\end{align*}
where $\Upsilon_{N,j}\big( \cdot|\nu_1, \dots, \nu_N)$ denotes the $j$-th marginal of $\Upsilon_{N}\big( \cdot|\nu_1, \dots, \nu_N)$, and $\pi_{N,j}(\cdot) = \pp[\randommeasure(A_{j,\delta}) \in \cdot]$. Upon setting
\begin{equation*} \label{post_mean_finite_dim}
m_{j,n}^{(\xi)}(\pi_{N,j}) := \frac{\int_0^1 t^{1+ n \phi_{j,n}^{(\xi)}} (1-t)^{n(1-\phi_{j,n}^{(\xi)})} \pi_{N,j}(\ud t)}{\int_0^1 t^{n \phi_{j,n}^{(\xi)}} (1-t)^{n(1-\phi_{j,n}^{(\xi)})}  \pi_{N,j}(\ud t)}
\end{equation*}
and 
\begin{equation*} \label{post_var_finite_dim}
v_{j,n}^{(\xi)}(\pi_{N,j}) := \frac{\int_0^1 [t - m_{j,n}^{(\xi)}(\pi_{N,j})]^2 t^{n \phi_{j,n}^{(\xi)}} (1-t)^{n(1-\phi_{j,n}^{(\xi)})} \pi_{N,j}(\ud t)}{\int_0^1 t^{n \phi_{j,n}^{(\xi)}} (1-t)^{n(1-\phi_{j,n}^{(\xi)})}  \pi_{N,j}(\ud t)}\ ,
\end{equation*}
it holds
\begin{align}
& \ee\left[\Wp^{(\ppms)} \left(\delta_{\empiric^{(\eta)}}; \Sigma_N(\cdot\ |\ \nu_{\delta}(1; \eta^{(n)}), \dots, \nu_{\delta}(N; \eta^{(n)})) \right)\right] \nonumber \\
&\quad\leq \text{diam}(\ss) \left\{ \frac 12 \sum_{j=1}^N \ee\left[ |\phi_{j,n}^{(\xi)} - m_{j,n}^{(\xi)}(\pi_{N,j}) | +\sqrt{v_{j,n}^{(\xi)}(\pi_{N,j})} \right] \right\}^{1/p} \nonumber \\
&\quad =: \text{diam}(\ss) \left\{ \frac 12 \left[ M_n(N,\delta) + V_n(N,\delta) \right] \right\}^{1/p} \label{MV},
\end{align}
where we set $M_n(N,\delta):=\sum_{j=1}^{N}\ee[|\phi_{j,n}^{(\xi)} - m_{j,n}^{(\xi)}(\pi_{N,j})|]$ and $V_n(N,\delta):=\sum_{j=1}^{N}\ee[\sqrt{v_{j,n}^{(\xi)}(\pi_{N,j})}]$
To conclude the proof, the asymptotic evaluation of the terms $|\phi_{j,n}^{(\xi)} - m_{j,n}^{(\xi)}(\pi_{N,j})|$ and $v_{j,n}^{(\xi)}(\pi_{N,j})$ is a longstanding topic in classical probability theory and Bayesian statistics. In particular, precise asymptotic expansions of these terms can be found in \cite[Section 3]{Johnson(70)} and \cite[Chapter 20]{DasGupta(08)}, where it is shown that, under suitable regularity assumptions on $\pi_{N,j}$, both $|\phi_{j,n}^{(\xi)} - m_{j,n}^{(\xi)}(\pi_{N,j})|$ and $v_{j,n}^{(\xi)}(\pi_{N,j})$ are of order $n^{-1}$. 

With regards to the fourth term $\ee[ \Wp^{(\pms)}(\empiric^{(\eta)}; \empiric^{(\xi)} )]$ on the right-hand side of \eqref{splitPCR_NP}, such a term contains $\Wp^{(\pms)}\big(\empiric^{(\xi)}; \empiric^{(\eta)}\big)$, which can be bounded in view of the Birkhoff theorem \citep[Theorem 6.0.1]{AGS(08)} as follows
\begin{displaymath}
\Wp^{(\pms)}\big(\empiric^{(\xi)}; \empiric^{(\eta)}\big) \leq \left( \frac1n \sum_{i=1}^n [\ud_{\ss}(\xi_i; \eta_i)]^p \right)^{1/p}\ .
\end{displaymath}
From the definition of $\eta_i$, $\ud_{\ss}(\xi_i; \eta_i) \leq \max_{j=1, \dots, N} \text{diam}(A_{j,\delta}) \leq 2\delta$. Then $\Wp^{(\pms)}\big(\empiric^{(\xi)}; \empiric^{(\eta)}\big) \leq 2\delta$ holds $\pp\text{-a.s.}$. 

With regards to the fifth term $\ee[\Wp^{(\pms)}( \empiric^{(\xi)}; \pfrak_0)] $ on the right-hand side of \eqref{splitPCR_NP}, we observe that such a term is precisely the rate of convergence of a mean Glivenko-Cantelli theorem \citep{DR(19)}. This completes the proof.

\subsection{Some refinements of Theorem \ref{main_thm2}}\label{rmkk}

Theorem \ref{main_thm2} provides the first general approach to deal with PRCs in non-dominated Bayesian nonparametric models. In particular, Theorem \ref{main_thm2} is valid under minimal modeling assumptions, which makes the resulting PCR not explicit and possibly not sharp. Additional assumptions may be considered in order to improve Theorem \ref{main_thm2}, and hereafter we present a few examples that go in such a direction. First, we apply Theorem \ref{main_thm2} to obtain a more explicit $p$-WPCRs, whenever the metric structure of the space $(\ss, \ud_{\ss})$ is known along with the system of finite-dimensional distributions of the prior. Under additional assumptions, the next corollary quantifies the order of decaying of both $M_n(N_{\delta_n}, \delta_n)$ and $V_n(N_{\delta_n}, \delta_n)$, thus making the $p$-WPCR \eqref{main_bound2} more explicit. See Appendix \ref{proof_cor} for the proof of Corollary \ref{main_cor2}. 

\begin{corollary} \label{main_cor2}
Under the framework of Theorem \ref{main_thm2}, we consider the following additional assumptions:
\begin{enumerate}
\item[i)] $N_{\delta}(\ss, \ud_{\ss}) \sim (1/\delta)^{d}$ for some $d > 0$, as $\delta \to 0$;
\item[ii)] $L_n \sim n^s$ for some $s \geq 0$, as $n \to +\infty$;
\item[iii)] for any $\delta > 0$, there holds
\begin{equation} \label{asymptoticMV}
M_n(N_{\delta}, \delta) + V_n(N_{\delta}, \delta) \leq C(\pi) N n^{-\alpha}
\end{equation}
for some positive constant $C(\pi)$ depending only on the prior $\pi$ and $\alpha > sd$. 
\end{enumerate}
Then, the $p$-WPCR \eqref{main_bound2} has an upper bound whose asymptotic expansion, as $ n\rightarrow +\infty$, is $\varepsilon_{n,p}(\ss, \pfrak_0) + n^{-(\alpha - ds)/(d + p)}$.
\end{corollary}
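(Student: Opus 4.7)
The plan is to treat Corollary \ref{main_cor2} as an exercise in optimizing the free parameter $\delta_n$ in the bound \eqref{main_bound2}. Under assumption (i), $N_{\delta_n} \sim \delta_n^{-d}$, so assumption (iii) gives
\begin{equation*}
M_n(N_{\delta_n}, \delta_n) + V_n(N_{\delta_n}, \delta_n) \;\lesssim\; \delta_n^{-d} \, n^{-\alpha},
\end{equation*}
while assumption (ii) supplies $L_n \sim n^s$. Inserting these estimates into \eqref{main_bound2}, and absorbing the $\text{diam}(\ss)$ and the universal constants into a generic constant $C > 0$, I would reduce the bound to the schematic form
\begin{equation*}
\epsilon_n \;\leq\; \varepsilon_{n,p}(\ss, \pfrak_0) \;+\; C\bigl(1 + n^s\bigr)\delta_n \;+\; C\,\delta_n^{-d/p}\, n^{-\alpha/p}.
\end{equation*}

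Next I would balance the two $\delta_n$-dependent terms, setting $n^s \delta_n \asymp \delta_n^{-d/p} n^{-\alpha/p}$, which after rearranging gives
\begin{equation*}
\delta_n^{1 + d/p} \;\asymp\; n^{-s - \alpha/p}, \qquad \text{i.e.,} \qquad \delta_n \;\asymp\; n^{-(\alpha + sp)/(p + d)}.
\end{equation*}
Since $s \geq 0$ and $\alpha > sd \geq 0$, this prescribed $\delta_n$ is indeed an infinitesimal sequence, so Theorem \ref{main_thm2} is applicable with this choice. Substituting back, both the second and the third terms contribute a quantity of order
\begin{equation*}
n^s \delta_n \;\asymp\; n^{\, s - (\alpha + sp)/(p + d)} \;=\; n^{\,(s(p+d) - \alpha - sp)/(p+d)} \;=\; n^{-(\alpha - sd)/(p + d)},
\end{equation*}
which is the announced rate.

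Finally, I would check that the first constant term $2 \cdot 2 \delta_n$ in \eqref{main_bound2} (the part not multiplied by $L_n$) is automatically dominated by $n^s \delta_n$ for large $n$, so it does not alter the asymptotic. Putting everything together yields $\epsilon_n \leq \varepsilon_{n,p}(\ss, \pfrak_0) + O\bigl(n^{-(\alpha - sd)/(d + p)}\bigr)$, as claimed. I do not anticipate any real obstacle here: the hypothesis $\alpha > sd$ is tailored precisely so that the optimal balance produces a genuinely infinitesimal second summand, and all the structural work has already been carried out in Theorem \ref{main_thm2}. The only mildly delicate point is tracking constants and verifying that, after the optimization, the $\delta_n$ prescribed above is a legitimate infinitesimal sequence to which Theorem \ref{main_thm2} may be applied.
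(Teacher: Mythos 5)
Your proposal is correct and follows essentially the same route as the paper's proof: the paper also sets $\delta_n = n^{-\beta}$, equalizes the exponents of the $L_n\delta_n$-term and the $(M_n+V_n)^{1/p}$-term, and solves $\beta - s = (\alpha - d\beta)/p$ to get $\beta = (\alpha+ps)/(d+p)$, yielding the rate $n^{-(\alpha-sd)/(d+p)}$. Your direct balancing is the same computation in slightly different notation, and your sanity checks (that $\delta_n$ is infinitesimal and the constant-factor term $4\delta_n$ is dominated) match the paper's observation that $\alpha > sd$ forces $\beta \in (s, \alpha/d)$.
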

The problem of verifying the assumption iii) of Corollary \ref{main_cor2}, which provides an estimate of the critical term $M_n(N_{\delta}, \delta) + V_n(N_{\delta}, \delta)$, is straightforward under the assumption that the prior distribution $\pi$ has $1$-dimensional distributions, i.e. $\pp[\randommeasure(A) \in \cdot]$ for $A \in \ssa$, that coincide with Beta distributions. This is well-known to be the case of the Dirichlet process prior \citep{Fer(73)}. See also Appendix \ref{ineq} for a more general estimate of the term $M_n(N_{\delta}, \delta) + V_n(N_{\delta}, \delta)$, which is obtained by relying on a concentration inequality in \cite{DF(90)}.

We conclude our study on $p$-WPCR by presenting a proposition which is useful for the application of Theorem \ref{main_thm2} under the assumption that the posterior distribution in known explicitly. Again, because of its conjugacy, the Dirichlet process prior \citep{Fer(73)} is arguably the most notable example of a prior that satisfies such an assumption. Other examples are in the broad class of priors obtained by normalizing completely random measures \citep{JLP(09)}. In particular, the next proposition shows the critical role of the predictive distribution \eqref{predictive} in establishing $p$-WPCR through Theorem \ref{main_thm2}. See Appendix \ref{proof_prp2} for the proof of Proposition \ref{main_prop2}.

\begin{proposition} \label{main_prop2}
For any fixed $n \in \naturals$, the assumption \eqref{post_continuity} for $p=1$ and $L_n = \bar{L}$ for every $n \in \naturals$
is equivalent to
\begin{equation} \label{Lipschitz_predictive}
\Wuno^{(\pms)}(\alpha_n^{\ast}(\cdot|x^{(n)});  \alpha_n^{\ast}(\cdot|y^{(n)})) \leq \bar{L}\ \Wuno^{(\pms)}(\empiric^{(x)}; \empiric^{(y)})
\end{equation}
for all $x^{(n)}, y^{(n)} \in \ss^n$, with $\alpha_n^{\ast}$ being a distinguished solution of the predictive distribution displayed in \eqref{predictive}.
\end{proposition}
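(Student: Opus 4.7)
The forward implication $\eqref{post_continuity}\Rightarrow\eqref{Lipschitz_predictive}$ follows directly from the Kantorovich-Rubinstein duality for $\Wuno$. For any $1$-Lipschitz $f\colon\ss\to\R$ the linear functional $\Phi_f(p) := \int_\ss f\,\ud p$ is $1$-Lipschitz on $(\pms,\Wuno^{(\pms)})$ by Kantorovich-Rubinstein, and since the predictive is the barycenter of the posterior we have the identity $\int_\ss f\,\ud\alpha_n^{\ast}(\cdot|z^{(n)}) = \int_{\pms}\Phi_f(p)\,\pi_n^{\ast}(\ud p|z^{(n)})$. Applying Kantorovich-Rubinstein on $(\ppms,\Wuno^{(\ppms)})$ to the test function $\Phi_f$ and invoking \eqref{post_continuity} then yields
\begin{align*}
\Bigl|\int_\ss f\,\ud\alpha_n^{\ast}(\cdot|x^{(n)}) - \int_\ss f\,\ud\alpha_n^{\ast}(\cdot|y^{(n)})\Bigr|
& \leq \Wuno^{(\ppms)}(\pi_n^{\ast}(\cdot|x^{(n)});\pi_n^{\ast}(\cdot|y^{(n)})) \\
& \leq \bar{L}\,\Wuno^{(\pms)}(\empiric^{(x)};\empiric^{(y)}),
\end{align*}
and a final supremum over all $1$-Lipschitz $f$ gives \eqref{Lipschitz_predictive}.

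For the converse, the plan is to reconstruct the posterior from the infinite sequence of joint predictives. By the Ionescu-Tulcea theorem the $1$-step predictive kernels $\alpha_{n+k}^{\ast}(\cdot|x^{(n)},\cdot)$, $k\ge 0$, define a measure $Q_n^{x}$ on $\ss^{\naturals}$ under which the coordinate sequence $(Z_i)_{i\ge 1}$ is exchangeable, and by de Finetti's theorem its directing measure is distributed as $\pi_n^{\ast}(\cdot|x^{(n)})$, realised as the almost sure weak limit of the empirical $m^{-1}\sum_{i=1}^{m}\delta_{Z_i}$. Fix $x^{(n)}, y^{(n)}$ and build iteratively a coupling of $(Z_i^{x})_{i\ge 1}$ and $(Z_i^{y})_{i\ge 1}$: at step $k$, pair $Z_k^{x},Z_k^{y}$ through an optimal $\Wuno^{(\pms)}$-coupling of the conditional predictives at $(x^{(n)},Z^{x}_{1:k-1})$ and $(y^{(n)},Z^{y}_{1:k-1})$, then apply the hypothesis \eqref{Lipschitz_predictive} at sample size $n+k-1$ together with the convex decomposition
\begin{equation*}
\empiric^{(x,Z^{x}_{1:k-1})}_{n+k-1} = \tfrac{n}{n+k-1}\,\empiric^{(x)} + \tfrac{1}{n+k-1}\sum_{i=1}^{k-1}\delta_{Z_i^{x}}
\end{equation*}
and the convexity of $\Wuno^{(\pms)}$, to control the incremental cost. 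Pushing the resulting coupling of infinite sequences through the empirical map $m^{-1}\sum_{i=1}^{m}\delta_{\cdot}$ and passing to the limit $m\to\infty$, Birkhoff's theorem \citep[Theorem 6.0.1]{AGS(08)} identifies the associated cost with the $\Wuno^{(\ppms)}$-cost of a coupling of $\pi_n^{\ast}(\cdot|x^{(n)})$ and $\pi_n^{\ast}(\cdot|y^{(n)})$, which yields \eqref{post_continuity}.

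The main obstacle is the converse: preventing the iterative construction from amplifying the Lipschitz constant. The key structural facts to be leveraged are that each newly sampled $Z_i^{x}$ enters the updated empirical with weight only $1/(n+k)$, and that the predictive hypothesis is imposed with a common constant $\bar L$ at every sample size $n+k$; it is precisely this combination of diminishing weight and uniformity in $n$ that should allow the recursive bound on $\ee[\ud_\ss(Z_k^{x},Z_k^{y})]$ to average out through the empirical and pass to the limit $m\to\infty$ producing the exact constant $\bar L$ rather than an amplified version of it.
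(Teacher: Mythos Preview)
Your forward direction is correct and coincides with the paper's: the paper states it in one line as ``by the convexity of $\Wuno$'', and your Kantorovich--Rubinstein argument is the same thing unpacked.

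For the converse you share the paper's overall strategy---realise $\pi_n^\ast(\cdot\,|\,x^{(n)})$ as the de Finetti limit of the laws of the predictive empiricals $m^{-1}\sum_{i=1}^m\delta_{X_{n+i}}$ and carry a Lipschitz bound through the limit---but the execution differs. The paper works in the dual (Kantorovich--Rubinstein) formulation and proves the bound for the $m$-step predictive law by induction on $m$, \emph{uniformly in the sample size}, using the tower property to write the $(m{+}1)$-step predictive as a $1$-step predictive at size $n$ composed with an $m$-step predictive at size $n{+}1$. You stay in the primal formulation and build a sequential coupling, iterating only the $1$-step predictive bound.

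This is where there is a genuine gap: you flag the amplification obstacle but do not resolve it, and the recursion your coupling actually produces fails to close for $\bar L>1$. With $W:=n\,\Wuno^{(\pms)}(\empiric^{(x)};\empiric^{(y)})$, $D_k:=\ee[\ud_\ss(Z_k^{x},Z_k^{y})]$ and $S_k:=W+\sum_{i\le k}D_i$, your step-$k$ estimate is $D_k\le \tfrac{\bar L}{n+k-1}\,S_{k-1}$, whence $S_m\le W\prod_{j=n}^{n+m-1}\bigl(1+\bar L/j\bigr)$. The quantity you must control in the limit is $(S_m-W)/m$. For $\bar L=1$ the product telescopes to $(n{+}m)/n$ and you recover exactly $\Wuno^{(\pms)}(\empiric^{(x)};\empiric^{(y)})$; for $\bar L>1$ the product grows like $(m/n)^{\bar L}$ and $(S_m-W)/m\to\infty$. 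The diminishing weight $1/(n{+}k)$ alone is not enough, because iterating the $1$-step hypothesis $m$ times compounds $\bar L$ multiplicatively. What the paper's organisation buys is that at each inductive step it invokes the $m$-step hypothesis \emph{once} (at size $n{+}1$) and the $1$-step hypothesis \emph{once} (at size $n$), producing three pieces with coefficients $\tfrac{m}{m+1}\tfrac{n}{n+1}$, $\tfrac{m}{m+1}\tfrac{1}{n+1}$ and $\tfrac{1}{m+1}$ that sum to $1$, so the constant is reproduced verbatim from $m$ to $m{+}1$. If you want to rescue the coupling viewpoint, you should mirror this structure: couple $Z_{m+1}$ via the optimal $1$-step coupling at size $n$, then conditionally couple $(Z_1,\dots,Z_m)$ via an optimal $m$-step coupling at size $n{+}1$, and run the induction on $m$ across all $n$---this is the primal counterpart of the paper's tower-property decomposition.
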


%%%%%%%%%%%%%%%%%%%%%%%%%%%%%%%%
%%%%%%%%%%%%%%%%%%%%%%%%%%%%%%%%
%%%%%%%%%%%%%%%%%%%%%%%%%%%%%%%%
%%%%%%%%%%%%%%%%%%%%%%%%%%%%%%%%

\section{Examples}\label{sect:illustrations}

We apply Theorem \ref{main_thm2} in a Bayesian nonparametric framework with the popolar Dirichlet process prior \citep{Fer(73)} and with the normalized extended Gamma process prior \citep[Example 2]{JLP(09)}. The Dirichlet process prior is a conjugate prior; we show that it satisfies both the assumptions of Corollary \ref{main_cor2} and the assumption \eqref{Lipschitz_predictive} of Proposition \ref{main_prop2}, and hence Theorem \ref{main_thm2} holds. The normalized extended Gamma process prior is a non-conjugate prior; we show that it satisfies the assumption \eqref{Lipschitz_predictive} of Proposition \ref{main_prop2}, and hence Theorem \ref{main_thm2} holds.

\subsection{The Dirichlet process prior}

We consider the Dirichlet process prior on the space $(\ss, \ssa)$. Let $q > 0$ and $H \in \pms$. It is well-known that the predictive distribution of a Dirichlet process with total mass $q$ and mean p.m. $H$ is given by
$$
\alpha_n(\cdot\ |\ x_1, \dots, x_n) = \frac{q}{q+n} H(\cdot) + \frac{n}{q+n} \empiric^{(x)}\ .
$$
Whence,
$$
\Wuno^{(\pms)}\left(\alpha_n(\cdot\ |\ x_1, \dots, x_n); \alpha_n(\cdot\ |\ y_1, \dots, y_n)\right) = \frac{n}{q+n} \Wuno^{(\pms)}(\empiric^{(x)}; \empiric^{(y)})
$$
for all $n \in \naturals$ and $x^{(n)}, y^{(n)} \in \ss^n$, entailing that Proposition \ref{main_prop2} is directly applicable, with $\bar{L}=1$. Then, Theorem \ref{main_thm2} holds. According to the definition of the Dirichlet process prior in terms of its finite-dimensional distributions \citep{Fer(73),Reg(01)}, Corollary \ref{main_cor2} can also be applied. In particular, according to Equation \eqref{MV} we have that
\begin{align*}
m_{j,n}^{(\xi)}(\pi_{N,j}) &= \frac{qH(A_{j,\delta}) + n \phi_{j,n}^{(\xi)}}{q + n}
\end{align*}
and
\begin{align*}
v_{j,n}^{(\xi)}(\pi_{N,j}) &= \frac{[qH(A_{j,\delta}) + n \phi_{j,n}^{(\xi)}] \cdot [qH(A^c_{j,\delta}) + n(1- \phi_{j,n}^{(\xi)})]}{(q + n)^2(q+n+1)}
\end{align*}
for $j = 1, \dots, N$. Whence,
$$
|m_{j,n}^{(\xi)}(\pi_{N,j}) - \phi_{j,n}^{(\xi)}| = \frac{q}{q+n}|H(A_{j,\delta}) - \phi_{j,n}^{(\xi)}| \leq \frac{q}{q+n}
$$
yielding that $M_n(N_\delta, \delta) \leq n^{-1}qN_{\delta}$. On the other hand, it is straightforward to show that $v_{j,n}^{(\xi)}(\pi_{N,j}) \leq n^{-1}$, so that it holds $V_n(N_\delta, \delta) \leq n^{-1/2}N_{\delta}$. In conclusion, the assumption \eqref{asymptoticMV} assumes the following form
$$
M_n(N_\delta, \delta) + V_n(N_\delta, \delta) \leq \frac{(q+1)N_{\delta}}{\sqrt{n}} \ .
$$
That is, Corollary \ref{main_cor2} holds true for some choice of $d > 0$, and for $s=0$ and $\alpha = 1/2$. Then, Theorem \ref{main_thm2} holds.

\subsection{The normalized extended Gamma process prior}

We consider the normalized extended Gamma process prior on the space $(\ss, \ssa)$, which is an example of a prior obtained by normalizing completely random measures \citep{JLP(09)}. See also \cite{Lij(10)} and \cite[Chapter 3]{GV(00)} for details. If $\tilde{\mu}$ denotes a completely random measure on $(\ss,\ssa)$ with L\'evy intensity measure $\nu (\ud s, \ud x) = s^{-1}e^{-s \beta (x)} \ud s  \alpha (\ud x)$ on $\reals^+ \times \ss$, where $\alpha$ is a finite measure on $(\ss, \ssa)$ with total mass $0 < a < +\infty$, then the normalized extended Gamma process prior is defined as the distribution of $\randommeasure := \tilde{\mu}/\tilde{\mu} (\ss)$. Here, we assume that $\beta : \ss  \to \reals^+$ is a Lipschitz function with Lipschitz constant $L >0$ such that there exist two constants $\beta_0$ and $\beta_1$ with the property $0 < \beta_0 \leq  \beta (x) \leq \beta_1 < +\infty  $ for any $x \in \ss$. According to  \cite[Proposition 2]{JLP(09)}, the predictive distribution of the normalized extended Gamma process equals
\begin{equation} \label{eq:predictive_CRM}
\pp (X_{n+1} \in \ud z | X_1, \ldots , X_n  ) =  w^{(n)} (z) \alpha (\ud z )  + \frac{1}{n} \sum_{j=1}^{K_n} w_j^{(n)} \delta_{X_j^*} (\ud z )
\end{equation} 
having denoted by $X_1^* ,\ldots , X_{K_n}^*$ the $K_n \leq n$ distinct values out of the sample $(X_1, \ldots , X_n)$, and where
\begin{displaymath}
w^{(n)} (z) = \frac{1}{n}\int_0^{+\infty}  \frac{u }{u +\beta (z)} f_{U_n}^{(x)}  (u ) \ud u
\end{displaymath}
and
\begin{displaymath}
w^{(n)}_j = n_j \int_0^{+\infty}  \frac{u }{u +\beta (X_j^*)} f_{U_n}^{(x)} (u) \ud u
\end{displaymath}
with $f_{U_n}^{(x) }$ being a density function on $\reals^+$ depending on the observed sample $(X_1, \ldots , X_n)$ and defined as
\begin{equation}\label{eq:U_n}
f_{U_n}^{(x)} (u)= \frac{u^{n-1} \exp \left\{ -\int_\ss  \log (u+\beta (z))[\alpha + n \mathfrak{e}_n^{(x)}](\ud z) \right\}}{
\int_0^{+\infty} u^{n-1} \exp \left\{ -\int_\ss  \log (u+\beta (z))[\alpha + n \mathfrak{e}_n^{(x)}](\ud z) \right\} \ud u}.
\end{equation}
By combining \eqref{eq:predictive_CRM} with \eqref{eq:U_n} through $w^{(n)} (z) $ and $w^{(n)}_j $, we can write the predictive distribution as follows
\begin{equation}\label{eq:predictive_gamma}
\pp (X_{n+1} \in \ud z | X_1, \ldots , X_n) =  \frac{1}{n} 	\int_0^{+\infty}  \frac{u }{u +\beta (z) }  f_{U_n}^{(x)}  (u) \ud u [\alpha+ n \mathfrak{e}_n^{(x)} ] (\ud z), 
\end{equation}
and we set $\zeta_n^{(x)} (z) :=n^{-1}\int_0^{+\infty}u (u +\beta (z))^{-1} f_{U_n}^{(x)}  (u) \ud u$ for easy of notation. Now, we show that the normalized extended Gamma process prior satisfies the assumption \eqref{Lipschitz_predictive} of Proposition \ref{main_prop2}. Then, we consider the Wasserstein distance between predictive distributions referring to different initial observed samples of size $n$, say $(X_1, \ldots , X_n) = (x_1, \ldots , x_n)$ and $(Y_1, \ldots , Y_n ) = (y_1, \ldots , y_n)$, respectively, that is
\begin{displaymath}
\Wuno^{(\Pc (\ss))} (\alpha_n (\, \cdot \, | x^{(n)}) ; \alpha_n (\, \cdot \, | y^{(n)}))
\end{displaymath}
where $\alpha_n (\, \cdot \, | x^{(n)})$ and $\alpha_n (\, \cdot \, | y^{(n)})$ indicate the predictive distributions as in \eqref{eq:predictive_gamma} for the two observed samples  $x^{(n)}= (x_1, \ldots , x_n)$ and $y^{(n)}= (y_1, \ldots , y_n)$ in $\ss^n$.
For the sake of simplifying notation we denote by $\text{Lip}_1$ the space of all Lipschitz functions $g: \ss \to \reals^+$ with Lipschitz constant less or equal then $1$, i.e., 
\begin{displaymath}
\text{Lip}_1: = \left\{ g: \ss \to \reals^+ : \; \exists \, L \leq 1 \text{ with } |g(x)-g(y)| \leq L \ud_\ss (x,y) , \text{ for any } x, y \in \ss\right\}.
\end{displaymath}
Then,
\begin{align}\label{eq:threeterms} 
&\Wuno^{(\Pc (\ss))} (\alpha_n (\, \cdot \, | x^{(n)}) ; \alpha_n (\, \cdot \, | y^{(n)}))\nonumber\\
 &\quad=  \Wuno^{(\Pc (\ss))} ( \zeta_n^{(x)} (z)  [\alpha+ n \mathfrak{e}_n^{(x)} ] (\, \cdot \,) ;  \zeta_n^{(y)} (z)  [\alpha+ n \mathfrak{e}_n^{(y)} ] (\, \cdot \,) )\nonumber \\
 & \quad= \sup_{g \in \text{Lip}_1} \Big| \int_\ss g (z)  [\zeta_n^{(x)} (z) - \zeta_n^{(y)} (z) ]  \alpha (\ud z )+
 \int_\ss g(z) \zeta_n^{(x)} (z) n \mathfrak{e}_n^{(x)} (\ud z ) -\int_\ss g(z) \zeta_n^{(y)} (z) n \mathfrak{e}_n^{(y)} (\ud z )   \Big| \nonumber \\
 & \quad \leq  \sup_{g \in \text{Lip}_1} \Big| \int_\ss g(z) [\zeta_n^{(x)} (z) - \zeta_n^{(y)} (z) ]  \alpha (\ud z )  \Big| +\sup_{g \in \text{Lip}_1} \Big| \int_\ss g (z)  [\zeta_n^{(x)} (z) - \zeta_n^{(y)} (z) ]   n \mathfrak{e}_n^{(x)} (\ud z ) \Big|\\
 & \quad\quad + \sup_{g \in \text{Lip}_1}  \Big| \int_\ss  g (z)  \zeta_n^{(y)} (z) n [\mathfrak{e}_n^{(x)}-\mathfrak{e}_n^{(y)}] (\ud z)\Big|\nonumber
\end{align}
by an application of the triangular inequality. Observe that the supremum can be equivalently made over all the functions $g \in \text{Lip}_1$ with the property $g(x_0)=0$ for some $x_0 \in \ss$; this is because Wasserstein metric is a distance between probability measures, and by considering the supremum over the sets of functions with a prescribed value at a certain point, such a distance remains the same. We make use of this fact hereafter. 

With regards to the first term $\sup_{g \in \text{Lip}_1} | \int_\ss g(z) [\zeta_n^{(x)} (z) - \zeta_n^{(y)} (z) ]  \alpha (\ud z )  | $ of \eqref{eq:threeterms}, by Fubini-Tonelli theorem
\begin{equation}
\label{eq:A}
\begin{split}
 &  \sup_{g \in \text{Lip}_1} \Big| \int_\ss g(z) [\zeta_n^{(x)} (z) - \zeta_n^{(y)} (z) ]  \alpha (\ud z )  \Big| \\ & \qquad
 =  \frac{1}{n} \sup_{g \in \text{Lip}_1} \Big| \int_0^{+\infty}   G_{g,\alpha, \beta}  (u)  [f_{U_n}^{(x)} (u ) - f_{U_n}^{(y)} (u)]  \ud u\Big|,
 \end{split}
\end{equation}
where $G_{g,\alpha, \beta}  (u) := \int_\ss g (z) u(u +\beta (z))^{-1} \alpha (\ud z )$. For any $g \in \text{Lip}_1$, the following chain of inequalities holds true
%\begin{align*}
% & \Big| \int_0^{+\infty}   G_{g,\alpha, \beta}  (u)  [f_{U_n}^{(x)} (u ) - f_{U_n}^{(y)} (u)]  \ud u\Big| \leq
% \int_0^{+\infty}  | G_{g,\alpha, \beta}  (u) | \cdot | f_{U_n}^{(x)} (u ) - f_{U_n}^{(y)} (u)|  \ud u 
 %& \qquad \leq  \int_0^{+\infty}  | G_{g,\alpha, \beta}  (u) | \cdot | f_{U_n}^{(x)} (u ) - f_{U_n}^{(y)} (u)  \ud u,
%\end{align*}
%and
\begin{align*}
 & \Big| \int_0^{+\infty}   G_{g,\alpha, \beta}  (u)  [f_{U_n}^{(x)} (u ) - f_{U_n}^{(y)} (u)]  \ud u\Big|\\
 &\quad\leq  \int_0^{+\infty} \Big| \int_\ss g (z)  \frac{u }{u +\beta (z)} \alpha (\ud z ) \Big|  | f_{U_n}^{(x)} (u ) - f_{U_n}^{(y)} (u) | \ud u \\
  & \quad \leq \int_0^{+\infty} \int_\ss |g (z)|  \Big|  \frac{u }{u +\beta (z)} \Big| \alpha (\ud z )  | f_{U_n}^{(x)} (u ) - f_{U_n}^{(y)} (u)|  \ud u   \\
  & \quad \leq \int_0^{+\infty} \int_\ss |g (z)- g(x_0)| \alpha (\ud z )  | f_{U_n}^{(x)} (u ) - f_{U_n}^{(y)} (u) | \ud u   \\
    & \quad \leq \int_0^{+\infty} \int_\ss \ud_{\ss} (z,x_0)  \alpha (\ud z )  | f_{U_n}^{(x)} (u ) - f_{U_n}^{(y)} (u) | \ud u\\
    &\quad  \leq   a  \text{diam} (\ss) \int_0^{+\infty}    | f_{U_n}^{(x)} (u ) - f_{U_n}^{(y)} (u) | \ud u,
\end{align*}
where we used the fact that $g $ is a Lipschitz function satisfying $g(x_0)=0$, and we exploited the assumption that metric space is totally bounded. Thus, we have determined an upper bound for the right-hand side of  \eqref{eq:A}, namely
\begin{equation}
\label{eq:boundA_L1}
   \sup_{g \in \text{Lip}_1} \Big| \int_\ss g(z) [\zeta_n^{(x)} (z) - \zeta_n^{(y)} (z) ]  \alpha (\ud z )  \Big|  \leq 
   \frac{a}{n}\text{diam}(\ss )  ||f_{U_n}^{(x)} - f_{U_n}^{(y)} ||_{1}
\end{equation}
where in Equation \eqref{eq:boundA_L1} we denoted by $||g||_1:= \int_0^{+\infty} |g(u)| \ud u$ the $L_1$-norm of a real-valued function on $\reals^+$. \\

With regards to the second term $\sup_{g \in \text{Lip}_1} | \int_\ss g (z)  [\zeta_n^{(x)} (z) - \zeta_n^{(y)} (z) ]   n \mathfrak{e}_n^{(x)} (\ud z ) |$ of \eqref{eq:threeterms}, by Fubini-Tonelli theorem
\begin{equation}
\label{eq:B}
\begin{split}
&   \sup_{g \in \text{Lip}_1} \Big| \int_\ss g (z)  [\zeta_n^{(x)} (z) - \zeta_n^{(y)} (z) ]   n \mathfrak{e}_n^{(x)} (\ud z ) \Big| \\ & \qquad
   = \sup_{g \in \text{Lip}_1}  \Big| \int_0^{+\infty}  G_{g,n,\beta} (u) [f_{U_n}^{(x)} (u) -f_{U_n}^{(y)}(u)] \ud u \Big|,
   \end{split}
\end{equation}
where $G_{g,n,\beta} (u) := \int_\ss g(x)  u(u +\beta (z))^{-1}  \mathfrak{e}_n^{(x)} (\ud z )$. Along similar lines as before, it can be proved that
\begin{align*}
 & \Big| \int_0^{+\infty}   G_{g,n, \beta}  (u)  [f_{U_n}^{(x)} (u ) - f_{U_n}^{(y)} (u)]  \ud u\Big|\\
 &\quad\leq   \int_0^{+\infty}  |G_{g,n, \beta}  (u)| |f_{U_n}^{(x)} (u ) - f_{U_n}^{(y)} (u)|  \ud u \nonumber \\
 & \quad\leq  \int_0^{+\infty} \int_\ss \ud_\ss (z, x_0)  \mathfrak{e}_n (\ud z )  |f_{U_n}^{(x)} (u ) - f_{U_n}^{(y)} (u)|  \ud u \\
&\quad \leq \text{diam} (\ss ) || f_{U_n}^{(x)} - f_{U_n}^{(y)} ||_1,
\end{align*}
where we exploited the fact that $g $ is a Lipschitz function such that $g(x_0)=0$, and we exploited the assumption that metric space is totally bounded. Thus, we have an upper bound for the right-hand side of  \eqref{eq:B}, that is
\begin{equation}
\label{eq:boundB_L1}
    \sup_{g \in \text{Lip}_1}  \Big| \int_0^{+\infty}  G_{g,n,\beta} (u) [f_{U_n}^{(x)} (u) -f_{U_n}^{(y)}(u)] \ud u \Big|
   \leq\text{diam} (\ss ) || f_{U_n}^{(x)} - f_{U_n}^{(y)} ||_1.
\end{equation}

Finally, with regards to the third term $ \sup_{g \in \text{Lip}_1}  | \int_\ss  g (z)  \zeta_n^{(y)} (z) n [\mathfrak{e}_n^{(x)}-\mathfrak{e}_n^{(y)}] (\ud z)|$ of \eqref{eq:threeterms},  we can write
\begin{align*}
&\sup_{g \in \text{Lip}_1}  \Big| \int_\ss  g (z)  \zeta_n^{(y)} (z) n [\mathfrak{e}_n^{(x)}-\mathfrak{e}_n^{(y)}] (\ud z)\Big|\\
&\quad =\sup_{g \in \text{Lip}_1} \Big| \int_0^{+\infty}   \int_\ss  g(x) \frac{u}{u +\beta (z)} [\mathfrak{e}_n^{(x)}- \mathfrak{e}_n^{(y)}] (\ud z ) f_{U_n}^{(x)} (u) \ud u \Big|
\end{align*}
and bound the absolute value of the right-hand side of the previous expression for any $g \in \text{Lip}_1$ such that $g (x_0) =0$, i.e.
\begin{align} \label{eq:triangularC}
& \Big| \int_0^{+\infty}   \int_\ss  g(z) \frac{u}{u +\beta (z)} [\mathfrak{e}_n^{(x)}- \mathfrak{e}_n^{(y)}] (\ud z ) f_{U_n}^{(x)} (u) \ud u \Big|\\ 
& \quad\leq  \int_0^{+\infty}  \Big| \int_\ss  g(z) \frac{u}{u +\beta (z)} [\mathfrak{e}_n^{(x)}- \mathfrak{e}_n^{(y)}] (\ud z )  \Big| f_{U_n}^{(x)} (u) \ud u \nonumber.
\end{align}
Now, we observe that for any fixed $u \in \reals^+$, the function $z \mapsto G(z):= g(z) u/(u +\beta (z))$ is a Lipschitz function on the domain $\ss$ with Lipschitz constant $1+ \text{diam} (\ss)  L/\beta_0$. In particular, for any $x,y \in \ss$ we have
\begin{align}\label{eq:triangularC11}
|G(x)-G(y)| &= \Big|  g(x)  \frac{u}{u+\beta (x)} - g(y)  \frac{u}{u+\beta (y)}  \Big|\nonumber\\
&  =\Big|  g(x)  \frac{u}{u+\beta (x)} - g(y)  \frac{u}{u+\beta (x)}  + g(y)  \frac{u}{u+\beta (x)}- g(y)  \frac{u}{u+\beta (y)}  \Big|\nonumber\\
& = |g(x)-g(y)| \frac{u}{u+\beta(x)}  +|g(y)| \Big|  \frac{u}{u+\beta (x)}-  \frac{u}{u+\beta (y)}  \Big|\nonumber\\
& \leq  \ud_\ss (x,y) \frac{u}{u +\beta (x)} +|g(y)-g(x_0)|  u  \frac{|\beta (y)-\beta (x)|}{(u+\beta (x))(u+\beta (y))}\nonumber\\
& \leq  \ud_\ss (x,y) \frac{u}{u +\beta (x)} + \ud_\ss (y,x_0)  u  \frac{L \ud_\ss (x,y)}{(u+\beta (x))(u+\beta (y))}\nonumber\\
&  \leq  \ud_\ss (x,y) \frac{u}{u +\beta (x)} + \text{diam} (\ss )  u  \frac{L \ud_\ss (x,y)}{(u+\beta (x))(u+\beta (y))}\nonumber\\
& \leq  \ud_\ss (x,y)  \left[ 1+   \text{diam} (\ss )  \frac{L}{\beta_0}\right].
\end{align}
From \eqref{eq:triangularC} and \eqref{eq:triangularC11},
\begin{align} \label{eq:boundC}
&\sup_{g \in \text{Lip}_1}  \Big| \int_\ss  g (z)  \zeta_n^{(y)} (z) n [\mathfrak{e}_n^{(x)}-\mathfrak{e}_n^{(y)}] (\ud z)\Big|\\
& \quad \leq  \left[ 1+\frac{L \text{diam} (\ss )}{\beta_0}  \right]  \int_0^{+\infty}  \sup_{r \in \text{Lip}_1}  \Big| 
\int_\ss  r(z)  [\mathfrak{e}_n^{(x)}-\mathfrak{e}_n^{(y)}] (\ud z) \Big|   f_{U_n}^{(x)}  (u ) \ud u\nonumber   \\
& \quad =  \left[ 1+\frac{L  \text{diam} (\ss )}{\beta_0}  \right]    \Wc_1^{(\Pc(\ss ))}  (\mathfrak{e}_n^{(x)}; \mathfrak{e}_n^{(y)})\nonumber.
\end{align}

By combining the inequality \eqref{eq:threeterms} with the upper bounds obtained in \eqref{eq:boundA_L1}, \eqref{eq:boundB_L1} and \eqref{eq:boundC} we can write that
\begin{align}\label{eq:Wasserstein_bound_extended}
&\Wc_1^{(\Pc (\ss))} (\alpha_n (\, \cdot \, | x^{(n)}) ; \alpha_n (\, \cdot \, | y^{(n)}))\\
  & \qquad \leq \left[ 1+\frac{L \text{diam} (\ss )}{\beta_0}  \right]   \Wc_1^{(\Pc(\ss ))}  (\mathfrak{e}_n^{(x)};\mathfrak{e}_n^{(y)})
 + \left[\frac{a}{n}+1 \right]\text{diam} (\ss )  || f_{U_n}^{(x)} - f_{U_n}^{(y)} ||_1\nonumber
\end{align}
and, to conclude, it remains to estimate $|| f_{U_n}^{(x)} - f_{U_n}^{(y)} ||_1$. In particular, for any  $x^{(n)}, y^{(n)} \in \ss^n$, by Lemma \ref{lem:L1}
\begin{displaymath}
|| f_{U_n}^{(x)} - f_{U_n}^{(y)} ||_1 \leq  \frac{ 2\beta_1^a aL}{\beta_0^{a+1}}\Wc_1^{(\Pc(\ss))}(\mathfrak{e}_n^{(x)}; \mathfrak{e}_n^{(y)}),
\end{displaymath}
which leads to 
\begin{align}\label{final_negp}
&\Wc_1^{(\Pc (\ss))} (\alpha_n (\, \cdot \, | x^{(n)}) ; \alpha_n (\, \cdot \, | y^{(n)}))\\
  & \qquad \leq   \left\{\left[ 1+\frac{L \cdot \text{diam} (\ss )}{\beta_0}  \right] 
 + \left[\frac{a}{n}+1 \right]\text{diam} (\ss ) \frac{ 2\beta_1^a aL}{\beta_0^{a+1}}  \right\} \Wc_1^{(\Pc(\ss))}(\mathfrak{e}_n^{(x)}; \mathfrak{e}_n^{(y)})\nonumber.   
\end{align}
The inequality \eqref{final_negp} entails that Proposition \ref{main_prop2} is applicable for a suitable choice of $\bar{L}$. Then, Theorem \ref{main_thm2} holds.

%%%%%%%%%%%%%%%%%%%%%%%%%%%%%%%%
%%%%%%%%%%%%%%%%%%%%%%%%%%%%%%%%
%%%%%%%%%%%%%%%%%%%%%%%%%%%%%%%%
%%%%%%%%%%%%%%%%%%%%%%%%%%%%%%%%

\section{Discussion}\label{discuss}

We introduced a general approach to provide PCRs in Bayesian nonparametric models where the posterior distribution is available through a more general disintegration than the Bayes formula, and hence models that are non-dominated for the observations. Some refinements of our main result are presented under additional prior assumptions, showing the critical role of the predictive distributions for establishing PCRs. To the best of our knowledge, this is the first general approach to provide PCRs in non-dominated Bayesian nonparametric models, and it paves the way to further work along this line of research. An interesting problem is to extend Proposition \ref{main_prop2} to deal with nonparametric priors whose predictive distributions do not depend uniquely on the empirical process of the observations. The Pitman-Yor process prior \cite{Per(92),Pit(97)} is arguably the most popular example of such a class of priors, with predictive distributions depending on both the empirical process of the observations and the number of distinct types among the observations \citep{Pit(03),Deb(15),Bac(17)}. Another interesting problem is to extend Theorem \ref{main_thm2} to deal with right-censored survival times, and then consider the use of nonparametric priors such as the Beta-Stacy process prior \cite{Wal(97)} and generalizations thereof in the class of neutral to the right process priors \citep{Lij(10)}. Finally, it is worth mentioning that our approach can be also applied to Bayesian nonparametric models that are dominated for the observations. In particular, any possible use of the Bayes formula should be seen as a mathematical tool to obtain the continuity assumption \eqref{post_continuity} through the arguments exposed in \cite[ Sections 2.3, Section 2.4 and Section 4.1]{DM(20b)}. Our approach may prove to be effective in such a context, especially in the case of real-valued observations. Work on this is ongoing.

%%%%%%%%%%%%%%%%%%%%%%%%%%%%%%%%
%%%%%%%%%%%%%%%%%%%%%%%%%%%%%%%%
%%%%%%%%%%%%%%%%%%%%%%%%%%%%%%%%
%%%%%%%%%%%%%%%%%%%%%%%%%%%%%%%%

\appendix

\section{Auxiliary lemmas of Section \ref{sect:main_problem}}

\begin{lemma}\label{lem_pcr}
Assume that $\pi \in \mathcal P_p(\ps)$ and that, for any $n \in \naturals$, $\pfrak_0^{\otimes_n} \ll \mu_n$. Then, 
$\pi_n(\cdot| \xi_1, \dots, \xi_n)$ is a well-defined random probability measure belonging to $\mathcal P_p(\ps)$ with $\pp$-probability one, and \eqref{Wpcr}
gives a PCR at $\pfrak_0$. 
\end{lemma}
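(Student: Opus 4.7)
The plan is to establish the two parts of the lemma in sequence: first the well-definedness of $\pi_n(\cdot|\xi_1,\dots,\xi_n)$ as a random element of $\mathcal{P}_p(\ps)$, and second the posterior contraction statement \eqref{post_consistency}, which will follow from a one-line Markov inequality once the first part is in place.

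For well-definedness, I would specialize the disintegration \eqref{disintegration} to $A_1=\dots=A_n=\ss$ and combine it with de Finetti's representation to obtain $\pi(B) = \int_{\ss^n} \pi_n(B|x^{(n)})\,\mu_n(\ud x^{(n)})$ for every $B \in \pmssa$. A standard monotone-class extension then gives, for every non-negative measurable $f:\ps\to[0,+\infty]$,
$$
\int_{\ps} f(\pfrak)\,\pi(\ud\pfrak) = \int_{\ss^n} \left[\int_{\ps} f(\pfrak)\,\pi_n(\ud\pfrak|x^{(n)})\right]\mu_n(\ud x^{(n)}).
$$
Applying this to $f(\pfrak) := [\ud_{\ps}(\pfrak,\pfrak_0)]^p$ and using $\pi \in \mathcal{P}_p(\ps)$, the left-hand side is finite, so the inner integral is finite for $\mu_n$-a.e.\ $x^{(n)}$. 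The hypothesis $\pfrak_0^{\otimes n} \ll \mu_n$ then transfers this to $\pfrak_0^{\otimes n}$-a.e.\ $x^{(n)}$, which is exactly the law of $\xi^{(n)}$ under $\pp$, yielding $\pi_n(\cdot|\xi^{(n)}) \in \mathcal{P}_p(\ps)$ with $\pp$-probability one. In particular, $\Wp^{(\ppms)}(\pi_n(\cdot|\xi^{(n)});\delta_{\pfrak_0})$ is $\pp$-a.s.\ finite and $\epsilon_n$ in \eqref{Wpcr} is well-defined.

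For the PCR property, fix any positive sequence $M_n\to+\infty$ and let $R_n := \pi_n(\{\pfrak : \ud_{\ps}(\pfrak,\pfrak_0)\geq M_n\epsilon_n\}|\xi_1,\dots,\xi_n)$. Markov's inequality applied to the probability measure $\pi_n(\cdot|\xi^{(n)})$ yields
$$
R_n \leq \frac{1}{M_n\epsilon_n}\int_{\ps}\ud_{\ps}(\pfrak,\pfrak_0)\,\pi_n(\ud\pfrak|\xi^{(n)}) = \frac{\Wuno^{(\ppms)}(\pi_n(\cdot|\xi^{(n)});\delta_{\pfrak_0})}{M_n\epsilon_n},
$$
and since $\Wuno \leq \Wp$ for $p\geq 1$ (Jensen's inequality along an optimal coupling), one has $R_n \leq \Wp^{(\ppms)}(\pi_n(\cdot|\xi^{(n)});\delta_{\pfrak_0})/(M_n\epsilon_n)$. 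Taking $\pp$-expectation gives $\ee[R_n] \leq 1/M_n \to 0$. Since $R_n \in [0,1]$, convergence in $L^1(\pp)$ forces convergence in $\pp$-probability, which is precisely \eqref{post_consistency}.

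I do not foresee a serious obstacle. The only slightly delicate point is the interplay between the $\mu_n$-almost-sure disintegration identity and the assumed absolute continuity $\pfrak_0^{\otimes n} \ll \mu_n$, which is exactly what is needed to turn the $p$-th moment bound into a $\pp$-almost sure statement about $\pi_n(\cdot|\xi^{(n)})$; the remaining PCR step is essentially a single application of Markov plus the standard Jensen-type comparison $\Wuno\le\Wp$.
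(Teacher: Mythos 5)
Your reconstruction of the PCR step and of the finiteness of $\Wp^{(\ppms)}(\pi_n(\cdot|\xi^{(n)});\delta_{\pfrak_0})$ is correct and essentially parallels the paper's; in fact your moment argument (push $f(\pfrak)=[\ud_{\ps}(\pfrak,\pfrak_0)]^p$ through the disintegration and deduce finiteness of the inner integral $\mu_n$-a.e., then $\pfrak_0^{\otimes n}$-a.e.) is written out more carefully than the paper's terse one-liner at that point. Your Markov-plus-$\Wuno\leq\Wp$ argument and the paper's Markov-plus-Lyapunov argument land on exactly the same bound $\ee[R_n]\leq 1/M_n$, so that part is fine.

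The gap is in what you call ``well-definedness.'' You interpret it as the membership $\pi_n(\cdot|\xi^{(n)})\in\mathcal P_p(\ps)$, but in the paper the term refers first of all to the non-uniqueness of the disintegration \eqref{disintegration} --- a point the paper flags explicitly in Section~\ref{sect:main_problem} just before Definition~\ref{def:consistency}: since $\pi_n(\cdot|\cdot)$ is defined only up to a $\mu_n$-null set, the random measure $\pi_n(\cdot|\xi_1,\dots,\xi_n)$ is not \emph{a priori} a well-specified object. The paper's proof opens precisely with this: because $(\ps,\psa)$ is a Borel space, any two solutions $\pi_n,\pi'_n$ of \eqref{disintegration} coincide as elements of $\pms$ for all $x^{(n)}$ outside a $\mu_n$-null set $N_n$, and the hypothesis $\pfrak_0^{\otimes n}\ll\mu_n$ then makes $\{\xi^{(n)}\in N_n\}$ a $\pp$-null event, so $\pi_n(\cdot|\xi^{(n)})$ is unambiguous $\pp$-a.s.\ regardless of the chosen version. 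Your proposal never addresses this, and it is needed: without it, $\epsilon_n$ in \eqref{Wpcr} and the left-hand side of \eqref{post_consistency} would depend on the choice of version of the posterior kernel, and the statement ``is a well-defined random probability measure'' would not be proved. Note that the absolute-continuity hypothesis $\pfrak_0^{\otimes n}\ll\mu_n$ is doing double duty in the paper --- once for this uniqueness step and once for the moment transfer --- whereas in your write-up it only appears in the latter role. Adding the uniqueness paragraph at the start of your argument would close the gap and make your proof match the paper's.
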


\begin{proof}
Recalling that $(\ps,\psa)$ is a Borel space, we have that any two solutions $\pi_n(\cdot|\cdot)$ and $\pi'_n(\cdot|\cdot)$ of \eqref{disintegration} satisfy $\pi_n(\cdot|x^{(n)}) = \pi'_n(\cdot|x^{(n)})$, as elements of
$\pms$, for all $x^{(n)} \in \ss^n\setminus N_n$, where $N_n$ is a $\mu_n$-null set. Thus, the assumption $\pfrak_0^{\otimes_n} \ll \mu_n$ entails that $\xi^{(n)} := (\xi_1, \dots, \xi_n)$ takes values in $N_n$ with $\pp$-probability zero, yielding the desired well-definiteness. If $\pi \in \mathcal P_p(\ps)$, any solution $\pi_n(\cdot|\cdot)$ of \eqref{disintegration} satisfies $\pi_n(\mathcal P_p(\ps)|x^{(n)}) = 1$ for almost every $x^{(n)}\in\ss^n$. Whence, $\pi_n(\mathcal P_p(\ps)|\xi^{(n)}) = 1$ $\pp$\text{-a.s.}, yielding
$$
\Wp^{(\ppms)}(\pi_n(\cdot|\xi^{(n)}); \delta_{\pfrak_0}) = \left( \int_{\ps} \left[ \Wp^{(\pms)}(\pfrak,\pfrak_0) \right]^p \pi_n(\ud\pfrak|\xi^{(n)}) \right)^{1/p}
$$
is a random variable which is $\pp$\text{-a.s.} finite. Then, combining Markov's and Lyapunov's inequalities we have that
$$
\pi_n\left(\left\{\pfrak \in \ps\ :\ \Wp^{(\pms)}(\pfrak,\pfrak_0)  \geq M_n\epsilon_n\right\} \big| \xi^{(n)} \right) \leq 
\frac{ \left(\int_{\ps} \left[ \Wp^{(\pms)}(\pfrak,\pfrak_0) \right]^p \pi_n(\ud\pfrak|\xi^{(n)})\right)^{1/p} }{M_n \epsilon_n}
$$
holds $\pp$\text{-a.s.}. After taking expectation of both sides of the previous inequality, and recalling \eqref{Wpcr}, we conclude that 
$$
\ee\left[ \pi_n\left(\left\{\pfrak \in \ps\ :\ \Wp^{(\pms)}(\pfrak,\pfrak_0) \geq M_n\epsilon_n\right\} \big| \xi^{(n)} \right) \right] \leq \frac{1}{M_n} \rightarrow 0
$$
holds true for any diverging sequence $\{M_n\}_{n \geq 1}$, which is tantamount to saying that $\epsilon_n$ is a PCR at $\pfrak_0$. 
\end{proof}

\section{Proofs of Section \ref{sec_mainres}}

\subsection{Proofs of Corollary \ref{main_cor2}}\label{proof_cor}

To prove the corollary, put $\delta \sim n^{-\beta}$ for some $\beta > 0$, so that $N_{\delta_n} \sim n^{d\beta}$. Therefore, leaving the Glivenko-Cantelli-term $\varepsilon_{n,p}$ out of these computations,
check that the term $2(2 + L_n)\delta_n$ is asymptotic to $n^{s-\beta}$, as $n\to +\infty$. Taking account of \eqref{asymptoticMV}, the last term on the right-hand side of \eqref{main_bound2} has an upper bound 
which is asymptotic to $n^{-(\alpha - d\beta)/p}$. At this stage, by equalizing the two exponents, that is by imposing
$$
\beta - s = \frac{\alpha - d\beta}{p}\ ,
$$
yields to the identity $\beta = (\alpha + ps)/(d+p)$. Moreover, note that assumption $\alpha> sd$ entails that $(\alpha + ps)/(d+p)\in (s, \alpha/d)$. Accordingly, the sum of the last two terms on the right-hand side of \eqref{main_bound2} is bounded by a term which is (globally) asymptotically equivalent to $n^{- (\alpha - sd)/(d+p)}$. This completes the proof. 

\subsection{An estimate of $M_n(N_{\delta}, \delta) + V_n(N_{\delta}, \delta)$}\label{ineq}

Following ideas originally developed in \cite{DF(90)}, let $\chi$ be a probability measure on $[0,1]$. For any $h \in (0, \frac 14)$, put 
\begin{equation*} \label{phiDF}
\phi(h) := \inf_{p \in [0,1]} \chi([0,1] \cap [p, p+h])
\end{equation*}
and, for any $p \in [0,1]$,
$$
R(n,p,h) := \frac{\int_{[0,1] \cap [p-h, p+h]} t^{np} (1-t)^{n(1-p)} \chi(\ud t)}{\int_{[0,1] \cap [p-h, p+h]^c} t^{np} (1-t)^{n(1-p)} \chi(\ud t)}\ . 
$$
Note that, if $\inf_{h \in (0, \frac 14)} \phi(h) > 0$, then the above ratio is well-defined since the denominator is positive. In addition, put
$$
H(p,t) := -p\log t - (1-p) \log (1-t)
$$
and 
$$
g(h) := \inf_{\substack{p,t \in [0,1], \\ |p-t| \geq h}} \{H(p,t) - H(p,p)\}\ . 
$$
Finally, setting
$$
h^{\ast} := \min\{h, 2^{-1} g(h) [g(h) - 2h^2]\}
$$
and 
\begin{equation} \label{psiDF}
\psi(h) := \phi(h^{\ast})\ ,
\end{equation}
there holds
\begin{equation} \label{DFinequality}
R(n,p,h) \geq \psi(h) e^{2nh^2}
\end{equation}
for all $n \in \naturals$, all $p \in [0,1]$ and all $h \in (0, \frac 14)$. See \cite{DF(90)} for details. Hereafter, we show how to use inequality \eqref{DFinequality} to get an upper bound for $M_n(N_{\delta}, \delta) + V_n(N_{\delta}, \delta)$ as in \eqref{asymptoticMV}. It is sufficient to deal with $M_n(N,\delta)$, as the treatment of  $V_n(N,\delta)$ is  analogous. In particular, for fixed $j \in \{1, \dots, N\}$, we have that
$$
|\phi_{j,n}^{(\xi)} - m_{j,n}^{(\xi)}(\pi_{N,j})| \leq h + \frac{1}{R(n, \phi_{j,n}^{(\xi)}, h)} \leq h + \frac{e^{-2nh^2}}{\psi(h)}
$$
for any $h \in (0, \frac 14)$, and then we can minimize the upper bound of the last expression with respect to $h$, thus optimizing the upper bound. In particular, if $\psi(h) \sim h^s$ as $h \to 0^+$, for some $s>0$ possibly depending on $N$, then the upper bound is minimized by a term that is asymptotically equivalent to $\sqrt{(s+1)\log n/n}$. 

\subsection{Proof of Proposition \ref{main_prop2}}\label{proof_prp2}

First, if there exists a distinguished solution $\pi_n^{\ast}(\cdot|\cdot)$ of \eqref{disintegration} satisfying \eqref{post_continuity} for all $x^{(n)}, y^{(n)} \in \ss^n$, then we can define
$$
\alpha_n^{\ast}(A|x^{(n)}) := \int_{\pms} \pfrak(A)\pi_n^{\ast}(\ud \pfrak|x^{(n)})
$$
and note that $\alpha_n^{\ast}(\cdot|\cdot)$ fulfills both \eqref{predictive}, by Fubini's theorem, and \eqref{Lipschitz_predictive}, by the convexity of $\Wuno$ \citep[Chapter 7]{Vill(03)}. On the other hand, we now suppose that \eqref{Lipschitz_predictive} is in force for some (unique) probability kernel $\alpha_n^{\ast}(\cdot|\cdot)$ satisfying \eqref{predictive}. The key observation is that the posterior distribution
can be obtained as weak limit of
$$
\pp\left[\frac 1m \sum_{i=1}^m \delta_{X_{i+n}} \in \cdot\Big |X_1, \dots, X_n\right]
$$
as $m \rightarrow +\infty$, by an application of de Finetti's theorem. Therefore, in view of the well-known Kantorovic-Rubinstein dual representation \citep[Chapter 11]{du}, the thesis to be proved is implied by the validity of
\begin{align} 
\sup_{\substack{\varphi : \mathcal P_1(\ss) \to \reals \\ 1\text{-Lipschitz}(\Wuno)}} & \Big|\ee\left[\varphi\left(\frac 1m \sum_{i=1}^m \delta_{X_{i+n}}\right)\ \Big| X^{(n)}=x^{(n)} \right] \nonumber \\
&\quad- \ee\left[\varphi\left(\frac 1m \sum_{i=1}^m \delta_{X_{i+n}}\right)\ \Big| X^{(n)}=y^{(n)} \right] \Big| \leq \bar{L} \ \Wuno^{(\pms)}(\empiric^{(x)}; \empiric^{(y)}) \label{dudley}
\end{align}
for any $n,m \in \naturals$ and $x^{(n)}, y^{(n)} \in \ss^n$, where the notation $1\text{-Lipschitz}(\Wuno)$ indicates that $|\varphi(\pfrak_1) - \varphi(\pfrak_2)| \leq \Wuno^{(\pms)}(\pfrak_1; \pfrak_2)$ for all
$\pfrak_1, \pfrak_2 \in \mathcal P_1(\ss)$. An equivalent formulation could be given in terms of the $m$-predictive distributions on the quotient metric space $(\ss^m\setminus\sim, \tilde{\ud}_{\ss}^m)$, say $\alpha_n^{(m)}(\cdot|X_1, \dots, X_n)$, which is given by
$$
\alpha_n^{(m)}(C_m|X_1, \dots, X_n) := \pp\left[\frac 1m \sum_{i=1}^m \delta_{X_{i+n}} \!\!\in \mathfrak{e}_m\left(\text{quotient}^{-1}(C_m)\right)\Big |X^{(n)}\right]
$$
for any $C_m \in \Bcr(\ss^m\setminus\sim)$. Hereafter, we prove \eqref{dudley} by induction on $m\geq1$. In particular, for $m=1$ the inequality \eqref{dudley} boils down to \eqref{Lipschitz_predictive}, where we have chosen just $\alpha_n^{\ast}(\cdot|\cdot)$ as representative, according to
$$
\ee\left[\varphi(\delta_{X_{n+1}})\ \Big| X^{(n)}=x^{(n)} \right] = \int_{\ss} \varphi(\delta_z) \alpha_n^{\ast}(\ud z|x^{(n)})\ . 
$$
Then, we assume that \eqref{dudley} is true for some $m$ and we prove that it is true for $m+1$. We denote by $\alpha_{n,m}^{\ast}(\cdot|\cdot)$ a version of the $m$-predictive distribution (i.e., the conditional distributions of $(X_{n+1}, \dots, X_{n+m})$ given $(X_1, \dots, X_n)$) for which \eqref{dudley} is true for any $x^{(n)}, y^{(n)} \in \ss^n$.
Hence, we notice that we can exploit the tower property of the predictive distributions to select $\alpha_{n,m+1}^{\ast}(\cdot|\cdot)$ so that
\begin{align*}
&\ee\left[\varphi\left(\frac{1}{m+1} \sum_{i=1}^{m+1} \delta_{X_{i+n}}\right)\ \Big| X^{(n)}=x^{(n)} \right] \\
&\quad= \int_{\ss^{m+1}} \varphi\left(\frac{1}{m+1} \sum_{i=1}^{m+1} \delta_{z_i}\right) \alpha_{n,m+1}^{\ast}(\ud z_1, \dots, \ud z_{m+1}\ |x^{(n)})\\
&\quad= \int_{\ss^m} \int_{\ss} \varphi\left(\frac{1}{m+1} \sum_{i=1}^{m+1} \delta_{z_i}\right) \alpha_{n+1,m}^{\ast}(\ud z_1, \dots, \ud z_m\ |x^{(n)}, z_{m+1}) \alpha_{n,1}^{\ast}(\ud z_{m+1}\ |x^{(n)}) \ .
\end{align*}
At this stage, the left-hand side of \eqref{dudley} with $m+1$ in place of $m$ can be bounded by the supremum, with the supremum taken over those functions $\varphi : \mathcal P_1(\ss) \to \reals$  
which are 1-Lipschitz$(\Wuno)$, of the following
\begin{align*} 
%\sup_{\substack{\varphi : \mathcal P_1(\ss) \to \reals \\ 1-\text{Lipschitz}(\Wuno)}} & \Big|\ee\left[\varphi\left(\frac{1}{m+1} \sum_{i=1}^{m+1} \delta_{X_{i+n}}\right)\ \Big| X^{(n)}=x^{(n)} \right]  \\
%&- \ee\left[\varphi\left(\frac{1}{m+1} \sum_{i=1}^{m+1} \delta_{X_{i+n}}\right)\ \Big| X^{(n)}=y^{(n)} \right] \Big| \\
&\Big| \int_{\ss^m} \int_{\ss} \varphi\left(\frac{1}{m+1} \sum_{i=1}^{m+1} \delta_{z_i}\right) \alpha_{n+1,m}^{\ast}(\ud z_1, \dots, \ud z_m\ |x^{(n)}, z_{m+1}) \alpha_{n,1}^{\ast}(\ud z_{m+1}\ |x^{(n)}) \\
& \quad-  \int_{\ss^m} \int_{\ss} \varphi\left(\frac{1}{m+1} \sum_{i=1}^{m+1} \delta_{z_i}\right) \alpha_{n+1,m}^{\ast}(\ud z_1, \dots, \ud z_m\ |x^{(n)}, z_{m+1}) \alpha_{n,1}^{\ast}(\ud z_{m+1}\ |y^{(n)}) \Big| \\
&\quad\quad+ \Big|\int_{\ss^m} \int_{\ss} \varphi\left(\frac{1}{m+1} \sum_{i=1}^{m+1} \delta_{z_i}\right) \alpha_{n+1,m}^{\ast}(\ud z_1, \dots, \ud z_m\ |x^{(n)}, z_{m+1}) \alpha_{n,1}^{\ast}(\ud z_{m+1}\ |y^{(n)}) \\
&\quad\quad\quad -  \int_{\ss^m} \int_{\ss} \varphi\left(\frac{1}{m+1} \sum_{i=1}^{m+1} \delta_{z_i}\right) \alpha_{n+1,m}^{\ast}(\ud z_1, \dots, \ud z_m\ |y^{(n)}, z_{m+1}) \alpha_{n,1}^{\ast}(\ud z_{m+1}\ |y^{(n)}) \Big| 
\end{align*}
which, for notational ease, we shorten as $|A| + |B|$. With regards to the term $|B|$, we observe that it holds
\begin{align*} 
|B| &\leq \int_{\ss} \Big| \int_{\ss^m} \varphi\left(\frac{1}{m+1} \sum_{i=1}^{m+1} \delta_{z_i}\right) \alpha_{n+1,m}^{\ast}(\ud z_1, \dots, \ud z_m\ |x^{(n)}, z_{m+1}) \\
&\quad- \int_{\ss^m} \varphi\left(\frac{1}{m+1} \sum_{i=1}^{m+1} \delta_{z_i}\right) \alpha_{n+1,m}^{\ast}(\ud z_1, \dots, \ud z_m\ |y^{(n)}, z_{m+1}) \Big|\ \alpha_{n,1}^{\ast}(\ud z_{m+1}\ |y^{(n)}) 
\end{align*}
and, since the mapping $\ss^m\setminus\!\!\sim\ \ni [z_1, \dots, z_m] \mapsto \sum_{i=1}^{m+1} \delta_{z_i}$ is $\frac{m}{m+1}$-Lipschitz for any fixed $z_{m+1}$, we write
\begin{align}
|B| &\leq L \frac{m}{m+1} \Wuno\left(\frac{1}{n+1}\left[\delta_{z_{m+1}} + \sum_{i=1}^n \delta_{x_i}\right]; \frac{1}{n+1}\left[\delta_{z_{m+1}} + \sum_{i=1}^n \delta_{y_i}\right]\right) \nonumber \\
& = \bar{L}\frac{m}{m+1} \frac{n}{n+1}\Wuno(\empiric^{(x)}; \empiric^{(y)}) \label{dudley_B}
\end{align}
by the convexity of $\Wuno$ \citep[Chapter 7]{Vill(03)}. With regards to the term $A$, we notice that it can be written as follows
$$
\int_{\ss} \Phi_{x^{(n)},n}(z) \alpha_{n,1}^{\ast}(\ud z\ |y^{(n)}) - \int_{\ss} \Phi_{x^{(n)},n}(z) \alpha_{n,1}^{\ast}(\ud z\ |x^{(n)})
$$ 
with
$$
\Phi_{x^{(n)},n}(z) := \int_{\ss^m} \varphi\left(\frac{1}{m+1} \left[\delta_z + \sum_{i=1}^{m} \delta_{z_i}\right]\right) \alpha_{n+1,m}^{\ast}(\ud z_1, \dots, \ud z_m\ |x^{(n)}, z)\ .
$$
If we write
\begin{align*} 
&\big| \Phi_{x^{(n)},n}(u) - \Phi_{x^{(n)},n}(v) \big| \\
&\quad\leq \Big| \int_{\ss^m} \varphi\left(\frac{1}{m+1} \left[\delta_u + \sum_{i=1}^{m} \delta_{z_i}\right]\right) \alpha_{n+1,m}^{\ast}(\ud z_1, \dots, \ud z_m\ |x^{(n)}, u) \\
&\quad\quad- \int_{\ss^m} \varphi\left(\frac{1}{m+1} \left[\delta_u + \sum_{i=1}^{m} \delta_{z_i}\right]\right) \alpha_{n+1,m}^{\ast}(\ud z_1, \dots, \ud z_m\ |x^{(n)}, v) \Big| \\
&\quad\quad\quad+ \int_{\ss^m} \Big|\varphi\left(\frac{1}{m+1} \left[\delta_u + \sum_{i=1}^{m} \delta_{z_i}\right]\right) - \varphi\left(\frac{1}{m+1} \left[\delta_v + \sum_{i=1}^{m} \delta_{z_i}\right]\right) \Big|\alpha_{n+1,m}^{\ast}(\ud z_1, \dots, \ud z_m\ |x^{(n)}, v)\ ,
\end{align*}
we conclude that, uniformly in $x^{(n)} \in \ss^n$, the function $z \mapsto \Phi_{x^{(n)},n}(z)$ is $\left(\frac{m}{m+1} \frac{1}{n+1} + \frac{1}{m+1}\right)$-Lipschitz, so that
$$
|A| \leq \bar{L} \left( \frac{m}{m+1} \frac{1}{n+1} + \frac{1}{m+1}\right)\Wuno(\empiric^{(x)}; \empiric^{(y)}) \ .
$$
Combining this last inequality with \eqref{dudley_B} yields \eqref{dudley} for any $m \in \naturals$ and, hence, \eqref{post_continuity}. This completes the proof.

\section{Auxiliary lemmas of Section \ref{sect:illustrations}}

\begin{lemma} \label{lem:L1}
For any $x^{(n)}, y^{(n)} \in \ss^n$ the $L_1$ distance between the density functions $f_{U_n}^{(x)}$ and $f_{U_n}^{(y)}$ is such that
\begin{equation}
\label{eq:L1_bound}
|| f_{U_n}^{(x)} - f_{U_n}^{(y)} ||_1 \leq  \frac{ 2\beta_1^a aL}{\beta_0^{a+1}}\Wc_1^{(\Pc(\ss))}(\mathfrak{e}_n^{(x)}; \mathfrak{e}_n^{(y)}).
\end{equation}
\end{lemma}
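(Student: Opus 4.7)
The plan is to use the standard ratio-splitting trick. Introduce the unnormalised numerator
$$
N_x(u) := u^{n-1} \exp\!\left(-h_x(u)\right), \qquad h_x(u) := \int_\ss \log(u+\beta(z))\,[\alpha + n\mathfrak{e}_n^{(x)}](\ud z),
$$
and the normalising constant $Z_x := \int_0^{+\infty} N_x(u)\,\ud u$, so that $f_{U_n}^{(x)}(u) = N_x(u)/Z_x$. Writing $N_x/Z_x - N_y/Z_y = (N_x - N_y)/Z_x + (N_y/Z_y)(Z_y - Z_x)/Z_x$, integrating in $u$, and bounding $|Z_y - Z_x| \leq \|N_x - N_y\|_1$ gives the deterministic reduction
$$
\|f_{U_n}^{(x)} - f_{U_n}^{(y)}\|_1 \leq \frac{2}{Z_x}\,\|N_x - N_y\|_1.
$$
Thus it suffices to bound $\|N_x - N_y\|_1$ from above and $Z_x$ from below.

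For the upper bound, I would first observe that for each fixed $u > 0$ the mean value theorem applied to $t \mapsto \log(u+t)$ and the Lipschitz property of $\beta$ yield
$$
|\log(u+\beta(z_1)) - \log(u+\beta(z_2))| \leq \frac{L}{u+\beta_0}\,\ud_\ss(z_1,z_2),
$$
so $z \mapsto \log(u+\beta(z))$ is $L/(u+\beta_0)$-Lipschitz. By Kantorovich--Rubinstein duality this gives
$$
|h_x(u) - h_y(u)| \leq \frac{nL}{u+\beta_0}\,\Wc_1^{(\Pc(\ss))}(\mathfrak{e}_n^{(x)};\mathfrak{e}_n^{(y)}).
$$
Combining the elementary $|e^{-a}-e^{-b}| \leq |a-b|\max(e^{-a},e^{-b})$ with the uniform upper bound $e^{-h_x(u)} \leq (u+\beta_0)^{-(a+n)}$ (which follows from $\log(u+\beta(z)) \geq \log(u+\beta_0)$ integrated against a measure of total mass $a+n$) leads to the pointwise bound
$$
|N_x(u) - N_y(u)| \leq nL\,\Wc_1^{(\Pc(\ss))}(\mathfrak{e}_n^{(x)};\mathfrak{e}_n^{(y)})\,\frac{u^{n-1}}{(u+\beta_0)^{a+n+1}}.
$$

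Symmetrically, for the lower bound on $Z_x$, the uniform upper bound $\log(u+\beta(z)) \leq \log(u+\beta_1)$ yields $e^{-h_x(u)} \geq (u+\beta_1)^{-(a+n)}$, hence
$$
Z_x \geq \int_0^{+\infty} \frac{u^{n-1}}{(u+\beta_1)^{a+n}}\,\ud u.
$$
Both integrals are classical Beta integrals: the substitution $u = c\,t/(1-t)$ gives $\int_0^{+\infty} u^{n-1}(u+c)^{-(a+n+k)}\,\ud u = c^{-a-k} B(n,a+k)$ for $k \in \{0,1\}$. Therefore
$$
\frac{\|N_x - N_y\|_1}{Z_x} \leq nL\,\Wc_1^{(\Pc(\ss))}(\mathfrak{e}_n^{(x)};\mathfrak{e}_n^{(y)}) \cdot \frac{\beta_0^{-a-1}B(n,a+1)}{\beta_1^{-a}B(n,a)},
$$
and using $B(n,a+1)/B(n,a) = a/(n+a) \leq a/n$ collapses the Beta factors, yielding the constant $a\beta_1^a/\beta_0^{a+1}$. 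Multiplying by the factor $2$ from the first step gives exactly the claimed inequality.

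No single step is a serious obstacle: the only care needed is the bookkeeping of $\beta_0$ versus $\beta_1$ in the two opposing bounds on $e^{-h_x(u)}$, and the identification of the resulting integrals with Beta functions. The finiteness conditions $n \geq 1$ and $a > 0$ are already built into the setup of the normalised extended Gamma process.
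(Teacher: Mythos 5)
Your proof is correct and follows essentially the same route as the paper's: the same ratio-splitting reduction to $\frac{2}{Z_x}\|N_x-N_y\|_1$, the same pointwise exponential bound obtained from the $L/(u+\beta_0)$-Lipschitz property of $z\mapsto\log(u+\beta(z))$ together with Kantorovich--Rubinstein duality, the same $\beta_0$/$\beta_1$ uniform bounds on $e^{-h}$, and the same Beta-integral identities. The only cosmetic difference is that the paper factors the $\alpha$-part of the exponential out separately before treating the empirical part, whereas you bound $e^{-h_x(u)}\leq(u+\beta_0)^{-(a+n)}$ in one go; the resulting estimates are identical.
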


\begin{proof}
If we define $I_{n-1}^{(x)}  (u)  := u^{n-1} \exp \{ -\int_\ss  \log (u+\beta (z))[\alpha + n \mathfrak{e}_n^{(x)}](\ud z)\}$, then we have to upper bound
\begin{align} \label{eq:I_bound}
|| f_{U_n}^{(x)} - f_{U_n}^{(y)} ||_1 & = \int_0^\infty  \Big| \frac{I_{n-1}^{(x)}  (u)}{\int_0^\infty  I_{n-1}^{(x)}  (u) \ud u } -\frac{I_{n-1}^{(y)}  (u)}{\int_0^\infty  I_{n-1}^{(y)}  (u) \ud u }\Big| \ud u\nonumber\\
& \leq \frac{2}{\int_0^\infty  I_{n-1}^{(x)}(u) \ud u } \int_0^\infty \Big| I_{n-1}^{(x)}  (u)- I_{n-1}^{(y)}  (u) \Big|   \ud u,
\end{align}
where the last inequality follows by adding and subtracting $I_{n-1}^{(y)}  (u)/\int_0^\infty I_{n-1}^{(x)}  (u) \ud u$. Now, we upper bound and lower bound the numerator and the denominator of \eqref{eq:I_bound}, respectively. With regards to the numerator of \eqref{eq:I_bound}, 
\begin{align}\label{eq:exponential_diff}
&\int_0^\infty \Big| I_{n-1}^{(x)}  (u)- I_{n-1}^{(y)}  (u) \Big|   \ud u\\
&\quad   = \int_0^\infty  u^{n-1}\exp \left\{  -\int_\ss \log (u +\beta (z))  \alpha (\ud z ) \right\}\nonumber\\
& \quad\quad \times\Big|\exp \left\{ -\int_\ss  \log (u+\beta (z)) n \mathfrak{e}_n^{(x)} (\ud z ) \right\} - \exp \left\{- \int_\ss  \log (u+\beta (z)) n \mathfrak{e}_n^{(y)} (\ud z )  \right\}  \Big| \ud u \nonumber\\
& \quad \leq \int_0^\infty  \frac{u^{n-1}}{(u +\beta_0)^a} \nonumber\\
& \quad\quad\times \Big|\exp \left\{ -\int_\ss  \log (u+\beta (z)) n \mathfrak{e}_n^{(x)} (\ud z ) \right\} - \exp \left\{- \int_\ss  \log (u+\beta (z)) n \mathfrak{e}_n^{(y)} (\ud z )  \right\}  \Big| \ud u\nonumber . 
\end{align}
In order to bound the difference between the two exponential functions, fix $u \in \reals^+$ and assume, without loss of generality, that $\int_\ss  \log (u+\beta (z)) n \mathfrak{e}_n^{(x)} (\ud z )<\int_\ss  \log (u+\beta (z)) n \mathfrak{e}_n^{(y)} (\ud z )$. Thus, we can write
\begin{align*}
&\Big|\exp \left\{ -\int_\ss  \log (u+\beta (z)) n \mathfrak{e}_n^{(x)} (\ud z ) \right\} - \exp \left\{- \int_\ss  \log (u+\beta (z)) n \mathfrak{e}_n^{(y)} (\ud z )  \right\}  \Big| \\
& \quad =  \exp \left\{ - \int_\ss  \log (u+\beta (z)) n \mathfrak{e}_n^{(x)} (\ud z ) \right\}\Big| 1-
\exp \left\{ - \left[\int_\ss  \log (u+\beta (z)) n [\mathfrak{e}_n^{(y)}- \mathfrak{e}_n^{(x)}] (\ud z ) \right] \right\} \Big|\\
&\quad\leq \exp \left\{ - \int_\ss  \log (u+\beta (z)) n \mathfrak{e}_n^{(x)} (\ud z ) \right\}  \Big| \int_\ss \log (u+\beta (z))  n [\mathfrak{e}_n^{(y)}- \mathfrak{e}_n^{(x)}] (\ud z )\Big|.
\end{align*}
Now, recall that the function $\ss \ni x \longmapsto \log (u+\beta (x)) \in \reals^+$ is Lipschitz with Lipschitz constant 
$L/(\beta_0+u)$, where $u \in \reals^+$ is a fixed quantity. Therefore, we can continue with the previous upper bound to get
\begin{align}\label{eq:exponential_diff11}
&\Big|\exp \left\{ -\int_\ss  \log (u+\beta (z)) n \mathfrak{e}_n^{(x)} (\ud z ) \right\} - \exp \left\{- \int_\ss  \log (u+\beta (z)) n \mathfrak{e}_n^{(y)} (\ud z )  \right\}  \Big|\\
&\quad \leq \exp \left\{ - \int_\ss  \log (u+\beta (z)) n \mathfrak{e}_n^{(x)} (\ud z ) \right\} n \frac{L}{u +\beta_0} \Wc_1^{(\Pc(\ss))}(\mathfrak{e}_n^{(x)}; \mathfrak{e}_n^{(y)})\nonumber \\
& \quad = \frac{1}{\prod_{i=1}^n (u+\beta (X_i))} \frac{nL}{u +\beta_0} \Wc_1^{(\Pc(\ss))}(\mathfrak{e}_n^{(x)}; \mathfrak{e}_n^{(y)})\nonumber\\
&\quad \leq \frac{nL}{(u +\beta_0)^{n+1}} \Wc_1^{(\Pc(\ss))}(\mathfrak{e}_n^{(x)}; \mathfrak{e}_n^{(y)})\nonumber.
\end{align} 
From \eqref{eq:exponential_diff} and \eqref{eq:exponential_diff11},
\begin{align}\label{eq:num}
&\int_0^\infty \Big| I_{n-1}^{(x)}  (u)- I_{n-1}^{(y)}  (u) \Big|   \ud u\\
&\quad \leq  \Wc_1^{(\Pc(\ss))}(\mathfrak{e}_n^{(x)}; \mathfrak{e}_n^{(y)}) \int_0^\infty  \frac{u^{n-1}}{(u +\beta_0)^a} \frac{nL}{(u+\beta_0)^{n+1}}  \ud u \nonumber \\
& \quad = nL\Wc_1^{(\Pc(\ss))}(\mathfrak{e}_n^{(x)}; \mathfrak{e}_n^{(y)}) \int_0^\infty  \frac{u^{n-1}}{(u +\beta_0)^{a+1+n}}   \ud u \nonumber \\
& \quad= \frac{nL}{\beta_0^{a+1}}\Wc_1^{(\Pc(\ss))}(\mathfrak{e}_n^{(x)}; \mathfrak{e}_n^{(y)})  B (a+1, n) =
\frac{nL}{\beta_0^{a+1}}\Wc_1^{(\Pc(\ss))}(\mathfrak{e}_n^{(x)}; \mathfrak{e}_n^{(y)})  	\frac{a}{a+n}  B (a,n) \nonumber\\
& \quad \leq 
\frac{aL}{\beta_0^{a+1}}\Wc_1^{(\Pc(\ss))}(\mathfrak{e}_n^{(x)}; \mathfrak{e}_n^{(y)})   B (a,n).\nonumber  
\end{align}
This completes the study of the numerator of \eqref{eq:I_bound}. With regards to the denominator of \eqref{eq:I_bound}, we can write
\begin{align}\label{eq:lowdenominator}
\int_0^\infty  I_{n-1}^{(x)}(u) \ud u &=  \int_0^\infty  u^{n-1} \exp \left\{ -\int_\ss \log (u+\beta (z))  [\alpha+ n \mathfrak{e}_n^{(x)}] (\ud z ) \right\}  \ud u \\
&\geq \int_0^\infty  u^{n-1}  e^{-(a+n)\log (u+\beta_1)}  \ud u\nonumber\\
& = \int_0^\infty  \frac{u^{n-1}}{(u+\beta_1)^{a+n}} \ud u\nonumber \\
&= \frac{B(a,n)}{\beta_1^a}.\nonumber
\end{align}
The proof is completed by combining the inequality \eqref{eq:I_bound} with the upper bound \eqref{eq:num} and the lower bound \eqref{eq:lowdenominator}.
\end{proof}

\section*{Acknowledgements}

Federico Camerlenghi, Emanuele Dolera and Stefano Favaro received funding from the European Research Council (ERC) under the European Union's Horizon 2020 research and innovation programme under grant agreement No 817257. The authors acknowledge the financial support from the Italian Ministry of Education, University and Research (MIUR), ``Dipartimenti di Eccellenza" grant 2018-2022. Federico Camerlenghi and Emanuele Dolera are members of the \textit{Gruppo Nazionale per l’Analisi Matematica, la Probabilità e le loro Applicazioni} (GNAMPA) of the \textit{Istituto Nazionale di Alta Matematica} (INdAM).

%%%%%%%%%%%%%%%%%%%%%%%%%%%%%%%%
%%%%%%%%%%%%%%%%%%%%%%%%%%%%%%%%
%%%%%%%%%%%%%%%%%%%%%%%%%%%%%%%%
%%%%%%%%%%%%%%%%%%%%%%%%%%%%%%%%

\end{document}